\author{Matthieu Calvez}
\title{Hyperbolicity of the complex of parabolic subgroups of Artin-Tits groups of type $B$}
\newtheorem{theorem}{Theorem}
\newtheorem{lemma}{Lemma}
\newtheorem{proposition}{Proposition}
\newtheorem{Question}{Question}
\newtheorem{Proposition-Definition}{Proposition-Definition}
\begin{document}

\maketitle
\begin{abstract}
We show that the complex of parabolic subgroups associated to the Artin-Tits group of type $B$ is hyperbolic. 
\end{abstract}

\section{Introduction  and background}\label{S:Intro}

An \emph{Artin-Tits group} is a group defined by a finite presentation involving a finite set of generators $S$ in which every pair of generators satisfies at most one balanced relation of the form 
$$\Pi(a,b;m_{ab})=\Pi(b,a;m_{ab}),$$
with $m_{a,b}\geqslant 2$ and where for $k\geqslant 2$, $$\Pi(a,b;k)=\begin{cases} (ab)^{\frac{k}{2}}  & \text{if $k$ is even},\\
(ab)^{\frac{k-1}{2}}a & \text{if $k$ is odd.}
\end{cases}$$

This presentation can be encoded by a \emph{Coxeter graph} $\Gamma$: the vertices are in bijection with the set $S$ and two distinct vertices $a,b$ are connected by an edge labeled $k$ if $k>2$ and labeled by $\infty$ if $a,b$ satisfy no relation. 
The Artin-Tits group defined by the Coxeter graph $\Gamma$ will be denoted $A_{\Gamma}$. 

When $\Gamma$ is connected, the group $A_{\Gamma}$ is said to be \emph{irreducible}. The quotient by the normal subgroup generated by the squares of the elements in $S$ is a \emph{Coxeter group}; when this group is finite, the Artin-Tits group is said to be of \emph{spherical type}. Also, the Artin-Tits group $A_{\Gamma}$ is called \emph{dihedral} if $\Gamma$ has two vertices. 
%{\color{red}We will always assume that $\Gamma$ is connected and has at least three vertices: the Artin-Tits groups under consideration are not dihedral nor cyclic. }

A proper subset $\emptyset \neq X\subsetneq S$ generates a \emph{proper standard parabolic subgroup} of~$A_{\Gamma}$ which is naturally isomorphic to the Artin-Tits group $A_{\Lambda}$ defined by the subgraph $\Lambda$ of $\Gamma$ induced by the vertices in $X$ \cite{VanDerLek}. A subgroup $P$ of $A_{\Gamma}$ is \emph{parabolic} if it is conjugate to a standard parabolic subgroup. 

The flagship example of an Artin-Tits group of spherical type is Artin's braid group on $n+1$ strands --i.e. the Artin-Tits group defined by the graph $A_n$ shown in Figure \ref{Figure}(a). This group can be seen as the Mapping Class Group of an $(n+1)$-times punctured closed disk $\mathbb D_{n+1}$, that is the group of isotopy classes of orientation-preserving automorphisms of $\mathbb D_{n+1}$ which induce the identity on the boundary of $\mathbb D_{n+1}$.

The group $A_{A_n}$ acts on the set of isotopy classes of non-degenerate simple closed curves in $\mathbb D_{n+1}$ (curves without auto-intersection and enclosing at least 2 and at most $n$ punctures) hence on the \emph{curve graph} of $\mathbb D_{n+1}$. A celebrated theorem of Masur and Minsky states that the curve graph of a surface is hyperbolic \cite{MasurMinsky1}; it is actually the typical example of what is now called a \emph{hierarchically hyperbolic space} \cite{Sisto, BHS1,BHS2}. 

A natural and challenging question is whether, like braid groups, any irreducible Artin-Tits group $A_{\Gamma}$ (of spherical type) admits a nice action on a hierarchically hyperbolic space. 
To answer this question, the first step is to define a hyperbolic space on which $A_{\Gamma}$ acts in the same way as the braid group acts on the curve graph of the punctured disk. 

For spherical type $A_{\Gamma}$, the \emph{complex of irreducible parabolic subgroups} $C_{parab}(A_{\Gamma})$ was recently introduced by Cumplido, Gebhardt, Gonz\'alez-Meneses and Wiest \cite[Definition 2.3]{CGGMW} as a candidate for this purpose. The definition was generalized to $FC$-type Artin-Tits groups by Rose Morris-Wright \cite[Definition 4.1]{MorrisWright}. 
Hyperbolicity of $\mathcal C_{parab}(A_{\Gamma})$ is currently an important open problem and excepted the dihedral case \cite[Proposition 4.4 (i)]{CalvezWiest}, only one positive answer is known. This can be stated as follows: 
\begin{theorem}\cite{CGGMW}\label{T:Intro}
The complex of irreducible parabolic subgroups $\mathcal C_{parab}(A_{A_n})$ is \emph{isometric} to the curve graph 
of the $(n+1)$-times punctured disk $\mathbb D_{n+1}$. Therefore $\mathcal C_{parab}(A_{A_n})$ is hyperbolic. 
\end{theorem}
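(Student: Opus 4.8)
The plan is to establish an explicit, equivariant bijection between the vertices of $\mathcal C_{parab}(A_{A_n})$ and the vertices of the curve graph of $\mathbb D_{n+1}$, and then to verify that this bijection carries edges to edges in both directions, so that it is an isomorphism of graphs (hence an isometry). Recall that the vertices of $\mathcal C_{parab}(A_{A_n})$ are the irreducible parabolic subgroups of the braid group $A_{A_n}$, while the vertices of the curve graph are the isotopy classes of non-degenerate simple closed curves in $\mathbb D_{n+1}$. Under the identification of $A_{A_n}$ with the mapping class group of $\mathbb D_{n+1}$, each non-degenerate simple closed curve $c$ enclosing a subset of punctures determines a subgroup: namely, the subgroup of mapping classes supported in the subdisk bounded by $c$, which is a braid group on the enclosed strands and is a parabolic subgroup. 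The first step is therefore to make this correspondence precise and to show that it is a well-defined bijection.

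First I would check that the map $c \mapsto (\text{parabolic subgroup associated to } c)$ is well-defined on isotopy classes and lands among the \emph{irreducible} parabolic subgroups: a curve enclosing $2 \leqslant k \leqslant n$ punctures yields a braid group of type $A_{k-1}$, whose defining graph is connected, hence irreducible. Conversely, every proper irreducible parabolic subgroup of $A_{A_n}$ is conjugate to a standard parabolic $A_{X}$ with $X$ inducing a connected subgraph of $A_n$, i.e. an interval of consecutive generators, and such an interval corresponds to a round curve enclosing consecutive punctures; conjugating by an element of the mapping class group moves this round curve to an arbitrary non-degenerate simple closed curve. The key point here is injectivity and surjectivity, for which I would invoke the standard fact that a parabolic subgroup of the braid group determines, and is determined by, the isotopy class of the curve that supports it --- equivalently, the normalizer/support correspondence for parabolic subgroups of braid groups. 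This requires knowing that distinct isotopy classes of curves give non-conjugate-in-a-pinned-way, in fact genuinely distinct, parabolic subgroups.

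Next I would verify that the bijection is an isomorphism of graphs by comparing the two adjacency relations. On the curve graph side, two curves are adjacent when they can be realized disjointly (the exact edge convention of \cite{CGGMW} may differ slightly --- adjacency via disjointness versus nesting --- so I would match conventions carefully). On the parabolic side, the edge relation in $\mathcal C_{parab}$ of \cite{CGGMW} is defined in terms of inclusion or commutation of the corresponding parabolic subgroups. The heart of the argument is the geometric translation: two parabolic subgroups satisfy the adjacency condition of $\mathcal C_{parab}$ if and only if their supporting curves can be isotoped to be disjoint (or nested, according to the convention). I would prove this by passing between the algebraic statement about subgroups and the topological statement about supports, using that disjointness (resp. nesting) of curves corresponds exactly to commutation (resp. inclusion) of the braid subgroups they support.

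The main obstacle I anticipate is precisely this dictionary between the combinatorial edge relation on parabolic subgroups and the topological relation on curves: one must show that the algebraic conditions defining edges in $\mathcal C_{parab}(A_{A_n})$ are \emph{equivalent} to the geometric conditions defining edges in the curve graph, with no spurious extra edges on either side. This is where the specific definition in \cite[Definition 2.3]{CGGMW} must be unpacked and matched term by term against the curve-graph conventions, and where a careless mismatch of conventions (disjointness versus nesting, or the treatment of curves enclosing all or all-but-one puncture) could break the isometry. Once the edge relations are shown to coincide under the bijection, the graphs are isomorphic, the map is an isometry, and hyperbolicity of $\mathcal C_{parab}(A_{A_n})$ follows immediately from the Masur--Minsky theorem on the hyperbolicity of the curve graph.
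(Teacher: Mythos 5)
This statement is not proved in the paper at all: it is imported verbatim from \cite{CGGMW}, and the only related material in the text is the summary of the parabolic-subgroup/curve correspondence recalled at the start of Section~\ref{S:Curves}. Your outline is a faithful sketch of the strategy actually used in \cite{CGGMW}: the bijection sending a curve to the parabolic subgroup of mapping classes supported inside it, together with the dictionary matching the edge relation of \cite[Definition 2.3]{CGGMW} (inclusion or commutation of the two subgroups) with disjointness-or-nesting of the corresponding curves. Be aware that the two points you flag as ``anticipated obstacles'' --- that the curve is genuinely determined by the subgroup (well-definedness and injectivity of the support map) and that the algebraic adjacency is \emph{equivalent} to the geometric one --- are precisely the substantive theorems of \cite[Section 2]{CGGMW}, so your proposal is a correct plan rather than a complete argument; but as a reconstruction of the cited proof's approach it is accurate.
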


\begin{figure}
\center
\includegraphics{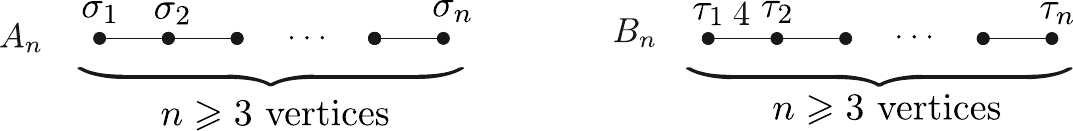}
\caption{The Coxeter graphs of type $A_n$ and $B_n$ with a labeling of the standard generators.}\label{Figure}
\end{figure}

%For a general irreducible Artin-Tits group $A_{\Gamma}$ of spherical type (FC-type, respectively), $\mathcal C_{parab}(A_{\Gamma})$ is conjectured to be hyperbolic \cite[Conjecture 2.4]{CGGMW},\cite{MorrisWright}.
According to the well-known Coxeter classification of finite irreducible Coxeter groups, the list of irreducible Artin-Tits groups of spherical type consists of four infinite families and six ``sporadic" groups; 
in this note we solve the question for one of the infinite families: Artin-Tits groups defined by the graph of type $B_n$ shown in Figure \ref{Figure}(b). We will prove:

\begin{theorem}\label{T:Main}
Let $n\geqslant 3$. The complexes of irreducible parabolic subgroups $\mathcal C_{parab}(A_{B_n})$ and $\mathcal C_{parab}(A_{A_n})$ are quasi-isometric. Therefore $\mathcal C_{parab}(A_{B_n})$ is hyperbolic.
\end{theorem}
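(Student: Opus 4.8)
The plan is to exploit the classical realization of $A_{B_n}$ as the subgroup of the braid group $A_{A_n}=\mathrm{MCG}(\mathbb D_{n+1})$ fixing one distinguished puncture $p_0$ (equivalently, as the group of braids of the $n$-punctured annulus), which presents $A_{B_n}$ as a finite-index subgroup of $A_{A_n}$. Under this identification I would first pin down the irreducible parabolic subgroups of $A_{B_n}$ in terms of curves of $\mathbb D_{n+1}$: a curve $c$ not surrounding $p_0$ carries a type-$A$ parabolic of $A_{B_n}$, which coincides with the parabolic of $A_{A_n}$ supported on $c$; a curve $c$ surrounding $p_0$ together with $k$ further punctures carries a type-$B_k$ parabolic, namely the stabilizer of $p_0$ inside the type-$A_k$ parabolic of $A_{A_n}$ supported on $c$ (note that here $\langle\sigma_0^2\rangle$, not $\langle\sigma_0\rangle$, is the generator $t$). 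The key structural claim is that $P\mapsto P\cap A_{B_n}$ is a bijection from the irreducible parabolics of $A_{A_n}$ onto those of $A_{B_n}$, with inverse $Q\mapsto\langle Q\rangle_{A_{A_n}}$, the smallest $A_{A_n}$-parabolic containing $Q$. For this I would lean on the known stability of parabolic subgroups of spherical-type Artin-Tits groups under intersection, and on the classical fact that the point-stabilizer in a braid group on $k+1$ strands is $A_{B_k}$.

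Granting this dictionary, I would define the comparison map $\Phi\colon\mathcal C_{parab}(A_{B_n})\to\mathcal C_{parab}(A_{A_n})$ on vertices by $Q\mapsto\langle Q\rangle_{A_{A_n}}$, that is, by remembering only the underlying curve of $\mathbb D_{n+1}$. Since $\Phi$ is a bijection on vertices, the entire content is the comparison of the two adjacency relations. Via Theorem~\ref{T:Intro} the $A_{A_n}$ side is exactly the curve graph, whose edges record disjointness of curves; so the task is to show that two irreducible parabolics of $A_{B_n}$ are adjacent --- i.e.\ nested or orthogonal --- precisely when their underlying curves are disjoint, up to a uniformly bounded error. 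Nesting transfers for free from monotonicity of $\langle\,\cdot\,\rangle_{A_{A_n}}$. For orthogonality I would argue that $[P,Q]=1$ in $A_{B_n}$ is equivalent to disjointness of the two curves: disjoint supports always commute, which survives the passage to the $A_{A_n}$-closures of type-$B$ parabolics, while commuting forces the curves to be disjoint or nested through the standard centraliser description of Dehn twists. The delicate case is when one or both parabolics are of type $B$ (curves encircling $p_0$), where one must verify that trading the half-twist datum for the full-twist datum --- equivalently, intersecting with the point-stabilizer --- neither creates nor destroys an orthogonality relation by more than a bounded amount.

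I expect this adjacency comparison for the type-$B$ vertices to be the main obstacle, since it is exactly where the two groups genuinely differ and where the ``quasi'' enters rather than an honest isometry: the special puncture $p_0$ must be tracked through every nesting and orthogonality relation, and the degenerate curves (those enclosing $p_0$ and a single puncture, whose $A_{B_n}$-parabolic is $\langle\sigma_0^2\rangle$ rather than the full $\langle\sigma_0\rangle$) must be reconciled with their $A_{A_n}$-counterparts. Once $\Phi$ is shown to carry adjacent vertices to vertices at uniformly bounded distance, and likewise for the coarse inverse $P\mapsto P\cap A_{B_n}$, the map $\Phi$ is a quasi-isometry. Combining this with Theorem~\ref{T:Intro}, which identifies $\mathcal C_{parab}(A_{A_n})$ isometrically with the hyperbolic curve graph of $\mathbb D_{n+1}$, and with the invariance of Gromov-hyperbolicity under quasi-isometry of geodesic spaces, yields that $\mathcal C_{parab}(A_{B_n})$ is hyperbolic, as claimed.
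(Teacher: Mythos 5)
Your route is genuinely different from the paper's. The paper never compares the two complexes vertex-by-vertex; it replaces each complex by the quasi-isometric word-metric model $(A,d_{\mathcal{NP}(A)})$ of \cite{CalvezWiest} and shows (Proposition \ref{Prop:XNPHyp}) that the inclusion $\eta\colon A_{B_n}\hookrightarrow A_{A_n}$ is a quasi-isometry for these metrics, with an explicit coarse inverse $\psi$ built from the coset representatives $a_0,\ldots,a_n$ of $\mathfrak P_1$; the technical work is then concentrated in Lemma \ref{LemmaTechnical} (tracking how conjugation by the $a_i$ and $\tau_I$ moves standard parabolics), Paris's normalizer theorem and the Fenn--Rolfsen--Zhu centralizer result. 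Your plan instead constructs a vertex bijection $P\mapsto P\cap A_{B_n}$ between the complexes and compares adjacency directly. If carried out it would actually give more than the paper claims: since every irreducible parabolic on either side contains a nontrivial power of the Dehn twist about its curve, both adjacency relations (nesting, or commutation with trivial intersection) are detected by disjointness of the underlying curves, so the correspondence of edges should be exact and no ``bounded error'' enters --- your worry about the type-$B$ vertices being only coarsely compatible is, I believe, unfounded in the good direction.

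The place where you still owe a real argument is the dictionary itself, and the tools you name do not quite deliver it. The stability of parabolic subgroups under intersection concerns two parabolics of the \emph{same} group; here you must show that the intersection of an irreducible parabolic of $A_{A_n}$ with the finite-index subgroup $\mathfrak P_1$ (which is not a parabolic of $A_{A_n}$) is an irreducible parabolic of $A_{B_n}$, and that $P\mapsto P\cap A_{B_n}$ is a bijection onto all of them. This needs (i) transitivity of the $\mathfrak P_1$-action on curves of each topological type relative to the distinguished puncture, so that every irreducible parabolic of $A_{A_n}$ can be written as $A_I^{y}$ with $y$ 1-pure and $I$ standard, whence $A_I^{y}\cap\mathfrak P_1=(A_I\cap\mathfrak P_1)^{y}=\eta(B_I)^{y}$ as in Lemma \ref{L:XPAB}(a); and (ii) injectivity of the parabolic-to-curve assignment for $A_{B_n}$, i.e.\ the type-$B$ analogue of the correspondence of \cite{CGGMW}, which is not available off the shelf and must be deduced from the type-$A$ correspondence. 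These gaps are fillable, but they are precisely the content the paper sidesteps by working with the generating sets $\mathcal{NP}$ and $\mathcal P$ instead of with the complexes themselves; the trade-off is that your approach, once completed, yields a cleaner (and stronger, isometric) geometric statement, while the paper's yields in addition the $\mathcal P$-metric result of Proposition \ref{Prop:XPHyp}, which does not follow from a vertex dictionary for $\mathcal C_{parab}$.
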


For the proof of Theorem \ref{T:Main}, first recall from \cite{CalvezWiest} that any Artin-Tits group $A$ of spherical type contains a(n infinite) generating set $\mathcal{NP}(A)$ 
such that the associated word metric $d_{\mathcal{NP}(A)}$ turns $A$ into a metric space quasi-isometric to $\mathcal C_{parab}(A)$ \cite[Proposition 4.3]{CalvezWiest}: $\mathcal{NP}(A)$ is the set of all elements of $A$ which normalize some proper irreducible standard parabolic subgroup of $A$.

The main ingredient of the proof is a well-known monomorphism $\eta$ from $A_{B_n}$ to $A_{A_n}$; with the notation of Figure \ref{Figure}, 
$\eta$ is defined by $\eta(\tau_1)=\sigma_1^2$ and $\eta(\tau_i)=\sigma_i$ for $i\geqslant 2$. 
The image of $\eta$ is the subgroup $\mathfrak P_1$ of $(n+1)$-strands 1-pure braids, that is the subgroup of all $(n+1)$-strands braids in which the first strand ends in the first position. Note that $\mathfrak P_1$ has index $n+1$ in $A_{A_n}$.  
A presentation for $\mathfrak P_1$ was given by Wei-Liang Chow \cite{Chow} in 1948; for a proof that $\eta$ defines an isomorphism between $A_{B_n}$ and $\mathfrak P_1$, the reader may consult \cite{Peifer}. 

The quasi-isometry promised by Theorem \ref{T:Main} will then be easier to describe using the above-mentionned word-metric model: 
\begin{proposition}\label{Prop:XNPHyp}
Let $n\geqslant 3$. The monomorphism $\eta$ is a quasi-isometry between $(A_{B_n},d_{\mathcal{NP}(A_{B_n})})$ and $(A_{A_n},d_{\mathcal{NP}(A_{A_n})})$.
\end{proposition}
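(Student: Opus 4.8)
The plan is to show that $\eta$ is a quasi-isometric embedding with quasi-dense image, since $\eta$ is already a monomorphism. To do this I would analyze how $\eta$ interacts with the generating sets $\mathcal{NP}(A_{B_n})$ and $\mathcal{NP}(A_{A_n})$, which consist of elements normalizing proper irreducible standard parabolic subgroups. The first step is to understand the normalizers of parabolic subgroups on both sides and how they correspond under $\eta$. Concretely, the image $\mathfrak{P}_1$ of $\eta$ is the finite-index (index $n+1$) subgroup of $1$-pure braids, so $\eta$ already gives a bijection onto $\mathfrak{P}_1$; the subtlety is purely metric.

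Let me think about the two inequalities separately. For the upper bound (that $\eta$ does not increase distances by more than a multiplicative and additive constant), I would show that if $g \in A_{B_n}$ lies in $\mathcal{NP}(A_{B_n})$, then $\eta(g)$ can be written as a bounded-length product of elements of $\mathcal{NP}(A_{A_n})$. This amounts to checking, for each conjugacy type of proper irreducible standard parabolic $P \leqslant A_{B_n}$, that an element normalizing $P$ maps under $\eta$ to an element normalizing some parabolic of $A_{A_n}$ of comparable ``size'', up to a universally bounded correction. The irreducible standard parabolics of $A_{B_n}$ are of types $A_k$ and $B_k$ (reading off connected subgraphs of $B_n$), and I would track where each goes: a type-$A_k$ parabolic not involving the special generator $\tau_1$ maps by $\eta$ to the same type-$A_k$ parabolic of $A_{A_n}$, while a parabolic containing $\tau_1$ (hence of type $B_k$) maps into the $1$-pure subgroup of a type-$A_k$ parabolic, whose normalizer in $A_{A_n}$ I can control.

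For the lower bound (that $\eta^{-1}$ on $\mathfrak{P}_1$ does not increase distances too much), the symmetric task is to show that any generator $s \in \mathcal{NP}(A_{A_n})$, when we pass to $\mathfrak{P}_1$, contributes boundedly to the $\mathcal{NP}(A_{B_n})$-length of the corresponding element. Here the cleanest route is to use that $\mathfrak{P}_1$ has finite index $n+1$ in $A_{A_n}$, so a Schreier-type argument shows that $(A_{A_n}, d_{\mathcal{NP}(A_{A_n})})$ restricted to $\mathfrak{P}_1$ is quasi-isometric to $\mathfrak{P}_1$ with the metric coming from $\mathcal{NP}(A_{A_n}) \cap \mathfrak{P}_1$ together with a bounded set of coset representatives; I would then verify that this restricted generating set is, up to finite Hausdorff distance, the image $\eta(\mathcal{NP}(A_{B_n}))$. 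The quasi-density of the image is immediate from finite index: every element of $A_{A_n}$ is within distance one (in $d_{\mathcal{NP}(A_{A_n})}$, after a bounded choice of coset representatives) of a point of $\mathfrak{P}_1$.

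The main obstacle I anticipate is the precise correspondence of normalizing elements, i.e. proving that $\eta(\mathcal{NP}(A_{B_n}))$ and $\mathcal{NP}(A_{A_n}) \cap \mathfrak{P}_1$ are at finite Hausdorff distance in the word metric. The two natural inclusions need not be equalities: there can be elements of $A_{A_n}$ normalizing a parabolic that happen to land in $\mathfrak{P}_1$ but that do not obviously normalize a parabolic of $A_{B_n}$, and conversely the special squared generator $\eta(\tau_1)=\sigma_1^2$ means that the $B_n$-parabolics containing $\tau_1$ sit inside $A_n$-parabolics in a slightly twisted way. Resolving this requires a careful case analysis of which standard parabolics of $A_{A_n}$ survive intersection with $\mathfrak{P}_1$ as (conjugates of) images of $B_n$-parabolics, and bounding the finitely many exceptional elements; I expect this bookkeeping, rather than any deep geometric input, to be where the real work lies, with the hyperbolicity conclusion then following formally from Theorem~\ref{T:Intro} and the quasi-isometry invariance of hyperbolicity.
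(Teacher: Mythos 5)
Your high-level skeleton matches the paper's: $\eta$ is shown to be Lipschitz with quasi-dense image (the coset representatives $a_i$ have $\mathcal{NP}$-length at most $2$, cf.\ Lemma \ref{L:Length}), and the content lies in controlling the inverse direction. However, there is a genuine gap in how you propose to handle that direction. Your Schreier reduction ends with the task of comparing $\eta(\mathcal{NP}(A_{B_n}))$ with $\mathcal{NP}(A_{A_n})\cap\mathfrak P_1$, and you identify this comparison as the main obstacle. In fact these two sets are \emph{exactly equal} (Lemma \ref{L:XNPAB}, via Paris's normalizer theorem and the Fenn--Rolfsen--Zhu result that $\sigma_1$ and $\sigma_1^2$ have the same centralizer), so the obstacle you single out is the easier part. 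The real difficulty is elsewhere: the Schreier generators of $\mathfrak P_1$ relative to $\mathcal{NP}(A_n)$ are not elements of $\mathcal{NP}(A_n)\cap\mathfrak P_1$ but elements of the form $a_{i_0}^{-1}ga_{j_0}$ with $g\in N_{A_n}(A_I)$ and the product $1$-pure. Knowing that the two generating sets of $\mathfrak P_1$ coincide (or are at finite Hausdorff distance) does not bound the word length of these conjugated elements, and it does not control the distortion of $\mathfrak P_1$ inside $(A_{A_n},d_{\mathcal{NP}(A_n)})$: a short $\mathcal{NP}(A_n)$-word joining two points of $\mathfrak P_1$ need not stay in $\mathfrak P_1$, so $d_{\mathcal{NP}(A_n)\cap\mathfrak P_1}$ could a priori be much larger than $d_{\mathcal{NP}(A_n)}$ on $\mathfrak P_1$.

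Concretely, $a_{i_0}^{-1}ga_{j_0}$ conjugates the standard parabolic $A_I$ to a parabolic which is in general \emph{not standard}, so this element need not lie in $\mathcal{NP}(A_n)$ at all. The paper's proof (Lemma \ref{LemmaTechnical} and Proposition \ref{Prop:PsiNPLipschitz}) resolves this by a case analysis on the position of the punctures $i_0+1$, $j_0+1$ relative to the curve $\mathcal C_I$: depending on the case, $a_{i_0}^{-1}ga_{j_0}$ already normalizes a standard parabolic ($A_I$ or its shift $A_{I'}$), or it must be corrected by an explicit bounded-length $1$-pure braid ($\tau_I$, or $\sigma_m\cdots\sigma_{m+k}$) before it normalizes a standard parabolic. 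This geometric/combinatorial step is the core of the argument and is absent from your plan; without it, the passage from ``the generating sets agree'' to ``$\eta^{-1}$ is Lipschitz on $\mathfrak P_1$'' does not go through.
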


Another interesting infinite generating set of an Artin-Tits group of spherical type was introduced in \cite{CalvezWiest}: $\mathcal P(A)$ is the union of all proper irreducible standard parabolic subgroups together with the cyclic subgroup generated by the square of the Garside element. Similarly to Proposition \ref{Prop:XNPHyp}, we will prove:

\begin{proposition}\label{Prop:XPHyp} Let $n\geqslant 3$. The monomorphism $\eta$ is a quasi-isometry between $(A_{B_n},d_{\mathcal{P}(A_{B_n})})$ and $(A_{A_n},d_{\mathcal{P}(A_{A_n})})$.
\end{proposition}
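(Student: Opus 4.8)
The plan is to prove Proposition~\ref{Prop:XPHyp} by showing that the monomorphism $\eta$ is bi-Lipschitz, up to additive constants, with respect to the two word metrics $d_{\mathcal P(A_{B_n})}$ and $d_{\mathcal P(A_{A_n})}$. Since $\eta$ is injective and its image $\mathfrak P_1$ has finite index $n+1$ in $A_{A_n}$, coarse surjectivity is automatic: every element of $A_{A_n}$ lies within bounded $d_{\mathcal P(A_{A_n})}$-distance of a coset representative times something in $\mathfrak P_1$, so I only need to control distances. Concretely, I want to find constants $\lambda\geqslant 1$ and $\kappa\geqslant 0$ such that for all $g,h\in A_{B_n}$,
\begin{equation*}
\tfrac{1}{\lambda}\,d_{\mathcal P(A_{B_n})}(g,h)-\kappa\;\leqslant\;d_{\mathcal P(A_{A_n})}(\eta(g),\eta(h))\;\leqslant\;\lambda\,d_{\mathcal P(A_{B_n})}(g,h)+\kappa.
\end{equation*}
By left-invariance of both metrics it suffices to take $h=1$ and compare the lengths $|g|_{\mathcal P(A_{B_n})}$ and $|\eta(g)|_{\mathcal P(A_{A_n})}$.

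For the upper bound I would track where $\eta$ sends the generators of $\mathcal P(A_{B_n})$. The generating set $\mathcal P(A)$ consists of the elements of the proper irreducible standard parabolic subgroups together with powers of the square of the Garside element. The standard parabolic subgroups of $A_{B_n}$ are products of smaller type-$A$ and type-$B$ pieces, and I expect $\eta$ to carry each such piece into (a bounded neighbourhood of) a standard parabolic subgroup of $A_{A_n}$: for the type-$A$ subgraphs generated by $\tau_2,\dots,\tau_n$ this is immediate since $\eta(\tau_i)=\sigma_i$, while the subgraph involving $\tau_1$ maps via $\tau_1\mapsto\sigma_1^2$ into the dihedral parabolic on $\sigma_1,\sigma_2$, and $\sigma_1^2$ has bounded $\mathcal P(A_{A_n})$-length. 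The Garside element of $A_{B_n}$ and its square should likewise map to a bounded-length element in the $\mathcal P(A_{A_n})$-metric, either into a parabolic or into a bounded power of the type-$A$ Garside square; verifying this via the Garside-theoretic relationship between $\Delta_{B_n}$ and $\Delta_{A_n}$ is the cleanest route. Each $\mathcal P(A_{B_n})$-generator thus maps to a word of bounded $\mathcal P(A_{A_n})$-length, giving the upper bound with $\kappa=0$.

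The lower bound is the substantial direction and I expect it to be the main obstacle. Here I must argue that $\eta$ does not dramatically contract distances: a long $\mathcal P(A_{A_n})$-geodesic for $\eta(g)$ should not correspond to a short $\mathcal P(A_{B_n})$-expression for $g$. My strategy is to exploit the relationship between the generating set $\mathcal P(A)$ and the complex $\mathcal C_{parab}(A)$, together with the known hyperbolicity and concrete geometry of $\mathcal C_{parab}(A_{A_n})\cong\mathcal C(\mathbb D_{n+1})$ from Theorem~\ref{T:Intro}. The key point is that the parabolic subgroups of $A_{B_n}$ correspond, under $\eta$, to parabolic subgroups of $A_{A_n}$ that are \emph{symmetric} with respect to the first puncture (equivalently, stabilised by the finite-index ambiguity of the covering $\mathfrak P_1\hookrightarrow A_{A_n}$); a $\mathcal P(A_{A_n})$-expression of length $\ell$ for $\eta(g)$ corresponds to a path of length $\ell$ through such symmetric parabolics, which can be pushed down to a $\mathcal P(A_{B_n})$-expression of comparable length for $g$. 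Making this ``pushing down'' precise requires showing that multiplication by a single $\mathcal P(A_{A_n})$-generator changes the relevant symmetric data by a bounded amount, so that the projection to the $B_n$-picture is coarsely Lipschitz.

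I anticipate two technical difficulties. First, the correspondence between standard parabolic subgroups under $\eta$ is not a perfect bijection: the subgraph of $B_n$ containing the vertex $\tau_1$ maps to a non-standard (though still parabolic) configuration in $A_{A_n}$, so I will need a bounded-distortion dictionary between $B_n$-parabolics and their $A_n$-images rather than an exact matching, and I must confirm that irreducibility is preserved or controlled under this dictionary. Second, handling the Garside-square generators requires checking that $\eta(\Delta_{B_n}^2)$ and $\Delta_{A_n}^2$ interact compatibly with the coset structure of $\mathfrak P_1$; since $\Delta_{A_n}^2$ is central in $A_{A_n}$ and $\mathfrak P_1$ contains it up to a bounded power, this should reduce to a finite computation, but it is the place where the index-$(n+1)$ subtlety could introduce an unbounded error if not treated carefully. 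Once both inequalities are established, combining them with the coarse surjectivity from finite index yields the claimed quasi-isometry.
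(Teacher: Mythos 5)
Your upper bound direction is essentially the paper's: Lemma \ref{L:XPAB} shows $\eta$ carries each $\mathcal P(B_n)$-generator to a $\mathcal P(A_n)$-generator (indeed $\eta(B_I)\subset A_I$ and $\eta(\Delta_{B_n})=\Delta_{A_n}^2$), so $\eta$ is $1$-Lipschitz; your worries about the $\tau_1$-piece and the Garside element are easily dispatched exactly as you suggest. The genuine gap is in the lower bound. The paper's mechanism is an explicit coarse retraction $\psi:A_{A_n}\to A_{B_n}$, defined by $\psi(y)=\eta^{-1}(ya_i)$ where $a_i=\sigma_i\cdots\sigma_1$ is the unique coset representative with $ya_i\in\mathfrak P_1$, and the substantive work (Proposition \ref{Prop:PsiLipschitz}, resting on Lemma \ref{LemmaTechnical}) is to show $\psi$ is Lipschitz: if $g\in A_I$ and $a_{i_0},a_{j_0}$ are the representatives attached to $y$ and $yg$, then the $1$-pure braid $a_{i_0}^{-1}ga_{j_0}$ either lies in a standard parabolic $A_I$ or $A_{I'}$, or becomes standard after conjugating by an explicit $1$-pure braid $\tau_I$ of $\mathcal P(B_n)$-length $1$; this is what bounds $d_{\mathcal P(B_n)}(\psi(y),\psi(yg))$ by $3$ (or by $1+\|\Delta_{B_n}\|_{\mathcal P(B_n)}$ for the central generators). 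None of this construction appears in your proposal.

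Moreover, the mechanism you do propose for the lower bound does not work as stated. A $\mathcal P(A_n)$-expression of length $\ell$ for $\eta(g)$ is a product of arbitrary elements of arbitrary proper irreducible standard parabolic subgroups of $A_{A_n}$: neither the factors nor the partial products lie in $\mathfrak P_1$, and the parabolics involved carry no symmetry with respect to the first puncture. So there is no ``path through symmetric parabolics'' to push down; the whole difficulty is precisely to convert a factorization whose intermediate data ignores the first strand into one that respects $\mathfrak P_1$, which is what the insertion of the coset representatives $a_{i_0},a_{j_0}$ and the correction by $\tau_I$ accomplish. Your appeal to the hyperbolicity and curve-complex geometry of $\mathcal C_{parab}(A_{A_n})$ is neither needed nor sufficient here: hyperbolicity of the target gives no control on contraction of distances under $\eta$. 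You correctly sense that a coarsely Lipschitz projection onto the $B_n$-side is the missing ingredient, but constructing it and verifying the Lipschitz estimate generator by generator is the actual content of the proof, and that step is absent.
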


Recall from \cite{CalvezWiest} that $(A_{A_n},d_{\mathcal P(A_{A_n})})$ is hyperbolic. Actually, $(A_{A_n},d_{\mathcal P(A_{A_n})})$ is quasi-isometric to the complex of arcs in the $(n+1)$-times punctured disk, both of whose extremities are in the boundary of the disk \cite[Proposition 3.3]{CalvezWiest} and this complex is hyperbolic. Proposition \ref{Prop:XPHyp} thus provides a positive partial answer to the conjecture that $(A,d_{\mathcal P(A)})$ is hyperbolic for all $A$ of spherical type. 

Also, Propositions \ref{Prop:XNPHyp} and \ref{Prop:XPHyp} allow to give a partial answer to \cite[Conjecture~4.11(ii)]{CalvezWiest}: 
the identity map $(A_{B_n},d_{\mathcal P(A_{B_n})})$ to $(A_{B_n},d_{\mathcal{NP}(A_{B_n})})$ is Lipschitz --as for any $A$, $\mathcal P(A)\subset \mathcal{NP}(A)$-- but not a quasi-isometry. This follows immediately from Propositions~\ref{Prop:XNPHyp} and \ref{Prop:XPHyp} and the corresponding fact for $A_{A_n}$ \cite[Proposition 4.12]{CalvezWiest}.
  
Finally, there is a third interesting generating set which can be associated to a \emph{spherical} type Artin-Tits group $A$: the set of \emph{absorbable elements} as introduced in \cite{CalvezWiest1,CalvezWiest2}. The corresponding Cayley graph $\mathcal C_{AL}(A)$, called the \emph{additional length graph}, is hyperbolic with infinite diameter for all $A$ of spherical type. 

\begin{Question}
Is it true that $\mathcal C_{AL}(A_{A_n})$ and $\mathcal C_{AL}(A_{B_n})$ are quasi-isometric? 
\end{Question}

\section{Proofs of Propositions 1 and 2}\label{S:Proofs}

{\bf{Notations.}} For simplicity of notation, we will now write $\mathcal P(X_n)$ and $\mathcal {NP}(X_n)$ instead of $\mathcal P(A_{X_n})$ and $\mathcal{NP}(A_{X_n})$, for $X\in \{A,B\}$. 
Associated to the word metric $d_{\mathcal K(X_n)}$ ($\mathcal K\in \{\mathcal P, \mathcal {NP}\}$) on $A_{X_n}$, we have the word-norm $||g||_{\mathcal K(X_n)}=d_{\mathcal K(X_n)}(1,g)$ for $g\in A_{X_n}$. 
The Garside element of $A_{X_n}$ will be denoted by~$\Delta_{X_n}$. The normalizer in $A_{X_n}$ of the (parabolic) subgroup $P$ of $A_{X_n}$ is denoted by~$N_{X_n}(P)$. Finally, for a braid $y\in A_{A_n}$, its image in the symmetric group $\mathfrak S_{n+1}$ will be denoted by $\pi_y$.

\subsection{Braids and curves}\label{S:Curves}
In this section we 
%will prove a key-lemma for both proofs of Propositions \ref{Prop:XNPHyp} and Proposition \ref{Prop:XPHyp}. 
review a geometric perspective on braids and parabolic subgroups of $A_{A_n}$ and establish some useful lemmas.
%; although not strictly necessary, this should help providing a good picture for the proof.
Recall that the braid group on $(n+1)$-strands can be identified with the Mapping Class Group of a closed disk with $(n+1)$ punctures $\mathbb D_{n+1}$. 
%, that is the group of isotopy classes of orientation-preserving automorphisms of $\mathbb D_n$ which induce the identity on the boundary of $\mathbb D_n$.

Assume that $\mathbb D_{n+1}$ is the closed disk of radius 
$\frac{n+2}{2}$ centered at $\frac{n+2}{2}$ and the punctures are the integer numbers $1\leqslant i \leqslant n+1$. 
For $1\leqslant i\leqslant n$, the generator~$\sigma_i$ of $A_{A_n}$ corresponds to a clockwise half-Dehn twist along an horizontal arc connecting the punctures $i$ and $i+1$.

The group $A_{A_n}$ acts --on the right-- on the set of isotopy classes of non-degenerate simple closed curves in $\mathbb D_{n+1}$. 
We will abuse notation and write ``curves'' instead of ``isotopy classes of non-degenerate simple closed curves''. 
%(curves without auto-intersection and enclosing at least 2 and at most $n$ punctures). 
The action of a braid~$y$ on a curve $\mathcal C$ will be denoted by $\mathcal C^y$ and we think of it as the result of pushing the curve $\mathcal C$ from top to bottom along the braid $y$.

Let $I$ be a proper connected subinterval of $[n]=\{1,\ldots,n\}$, that is $$\emptyset\neq I\subsetneq [n],\ \  \left[(i<j<k) \wedge (i,k\in I)\right] \Longrightarrow j\in I.$$ The proper irreducible standard subgroup of $A_{A_n}$ generated by $\sigma_i, i\in I$, will be denoted by $A_I$. The group $A_{A_n}$ acts --on the right-- on the set of proper irreducible parabolic subgroups by conjugation: for $y\in A_{A_n}$ and $P$ a proper irreducible parabolic subgroup, $P^y=y^{-1}Py$. 

As explained in \cite[Section 2]{CGGMW}, there is a one-to-one correspondence between the proper irreducible parabolic subgroups of $A_{A_n}$ and the curves in $\mathbb D_{n+1}$. Given a proper connected subinterval $I$ of $[n]$, write $m=min(I)$ and $k=\#I$; 
the curve corresponding to the standard parabolic subgroup $A_I$ is the isotopy class of the circle  --or \emph{standard curve}-- $\mathcal C_I$ surrounding the punctures $m,\ldots, m+k$.
More generally, if $P=A_I^y$ is a proper irreducible parabolic subgroup, then the curve corresponding to $P$ is $\mathcal C_I^y$. 
Conversely,  to a curve $\mathcal C$ in $\mathbb D_{n+1}$, we assign the subgroup consisting of all isotopy classes of automorphisms of $\mathbb D_{n+1}$ whose support is enclosed by $\mathcal C$, which is a parabolic subgroup. 

Hence the respective right actions on curves and on parabolic subgroups commute with the above correspondence. 
Notice in particular that the normalizer of a parabolic subgroup $P$ is exactly the stabilizer of the corresponding curve.

For each $1\leqslant i\leqslant n$, define $a_i=\sigma_{i}\ldots \sigma_1$ and let $a_0=Id$. In Figure \ref{Figure:Tau}(i)-(iii) are depicted $a_2, a_8, a_5\in A_{A_9}$, respectively. Observe that for each $0\leqslant i \leqslant n$, $\pi_{a_i}(i+1)=1$. Given $I$, we will be interested in the action of $a_i$ on $\mathcal C_I$. We make the following simple observation.

\begin{lemma}\label{L:ActionOfai}
Let $I$ be a proper connected subinterval of $[n]$, $m=\min(I)$ and $k=\#I$, so that the circle $\mathcal C_I$ surrounds the punctures $m$ to $m+k$. Let $0\leqslant i_0 \leqslant n$. 
\begin{itemize}
\item[(i)] If $i_0+1<m$, i.e. if the puncture $i_0+1$ is to the left of $\mathcal C_I$, then $\mathcal C_I^{a_{i_0}}=\mathcal C_I$.
\item[(ii)] If $i_0+1>m+k$, i.e. if the puncture $i_0+1$ is to the right of $\mathcal C_I$, then $\mathcal C_I^{a_{i_0}}=\mathcal C_{I'}$, where $I'=\{i+1, i\in I\}$.
\item[(iii)] If $i_0+1\in [m,m+k]$, i.e. if the puncture $i_0+1$ is in the interior of $\mathcal C_I$, then $\mathcal C_I^{a_{i_0}}=\mathcal C_I^{a_{m-1}}$.
\end{itemize}
\end{lemma}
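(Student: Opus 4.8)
The plan is to treat the three cases through one common mechanism: a braid whose half-twists are all supported strictly to one side of $\mathcal C_I$, or strictly inside it, fixes that curve; and the recursive shape $a_{i_0}=\sigma_{i_0}\cdots\sigma_1$ lets me peel off such factors. Throughout I use that the right action satisfies $\mathcal C^{gh}=(\mathcal C^g)^h$, and that the elements of a standard parabolic $A_J$ are isotopy classes supported in the disk bounded by $\mathcal C_J$; in particular they fix $\mathcal C_J$, and if that disk is disjoint from $\mathcal C_I$ they fix $\mathcal C_I$ as well.

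For (i), since $i_0\leqslant m-2$, the braid $a_{i_0}=\sigma_{i_0}\cdots\sigma_1$ lies in $A_{\{1,\ldots,i_0\}}$, whose curve encloses only the punctures $1,\ldots,i_0+1\leqslant m-1$, all lying to the left of $\mathcal C_I$. Hence $a_{i_0}$ is supported off $\mathcal C_I$ and fixes it. For (iii), I factor $a_{i_0}=\bigl(\sigma_{i_0}\cdots\sigma_m\bigr)\,a_{m-1}$, valid since $m-1\leqslant i_0\leqslant m+k-1$. The prefix $w=\sigma_{i_0}\cdots\sigma_m$ involves only indices lying in $I=\{m,\ldots,m+k-1\}$, so $w\in A_I$ and therefore fixes $\mathcal C_I$. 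Consequently $\mathcal C_I^{a_{i_0}}=(\mathcal C_I^{w})^{a_{m-1}}=\mathcal C_I^{a_{m-1}}$, which is the claim; when $i_0=m-1$ the prefix is empty and the statement is a tautology.

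For (ii), I first peel off the right-hand factors: writing $a_{i_0}=\bigl(\sigma_{i_0}\cdots\sigma_{m+k+1}\bigr)\,a_{m+k}$ (the prefix possibly empty), the prefix involves only indices $>m+k$, hence lies in a parabolic whose curve encloses the punctures $m+k+1,\ldots,i_0+1$, which sit to the right of $\mathcal C_I$; it fixes $\mathcal C_I$, reducing the claim to the base case $\mathcal C_I^{a_{m+k}}=\mathcal C_{I'}$. This base case is the only genuinely geometric point, and I expect it to be the main obstacle. Here $a_{m+k}$ drags the puncture $m+k+1$, which starts to the right of $\mathcal C_I$, leftward past all of $1,\ldots,m+k$ to position $1$, while the enclosed punctures $m,\ldots,m+k$ shift rigidly to $m+1,\ldots,m+k+1$.

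To settle the base case I would draw the successive half-twists and check that the dragged strand stays outside the disk bounded by $\mathcal C_I$, so that no linking is created and the image is exactly the standard curve $\mathcal C_{I'}$. The conceptual reason I expect this to work is precisely the dichotomy with (iii): in (ii) the dragged puncture begins outside $\mathcal C_I$ and can be routed entirely outside it, yielding a clean shift; in (iii) the dragged puncture begins inside $\mathcal C_I$ and is forced to exit, producing the non-standard curve $\mathcal C_I^{a_{m-1}}$ rather than a shift. Verifying that ``routed entirely outside'' is legitimate, i.e.\ that the positive half-twists do not wind the strand into the enclosed disk, is the delicate step; everything else is bookkeeping with the two support lemmas above.
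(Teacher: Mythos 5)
Your proposal is correct and follows essentially the same route as the paper: the paper also disposes of (i) and (ii) by inspection of a picture and proves (iii) exactly as you do, by peeling off the prefix $\sigma_{i_0}\cdots\sigma_m\in A_I$, which fixes $\mathcal C_I$ because it is supported inside it. The one step you leave open, the base case $\mathcal C_I^{a_{m+k}}=\mathcal C_{I'}$ of (ii), is no less rigorous than the paper's appeal to its figure, and if you want to close it without drawing you can use the conjugation computation that appears later in the paper's proof of Lemma \ref{LemmaTechnical}(2), namely $a_{i_0}^{-1}\sigma_i a_{i_0}=\sigma_{i+1}$ for $i\in I$ when $i_0+1>m+k$, which gives $A_I^{a_{i_0}}=A_{I'}$ and hence $\mathcal C_I^{a_{i_0}}=\mathcal C_{I'}$ by the equivariance of the curve--parabolic correspondence.
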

\begin{proof}
The contents of Lemma \ref{L:ActionOfai} are depicted in Figure \ref{Figure:Tau}(i) through (iii). Only the third case might need a few words proof. It suffices to observe that (if $i_0+1>m$) the first crossings $\sigma_{i_0},\ldots, \sigma_m$ of $a_{i_0}$ fix the curve $\mathcal C_I$ as they are inner to it. 
\end{proof}

\begin{figure}[hbt]
\center
\includegraphics[scale=0.7]{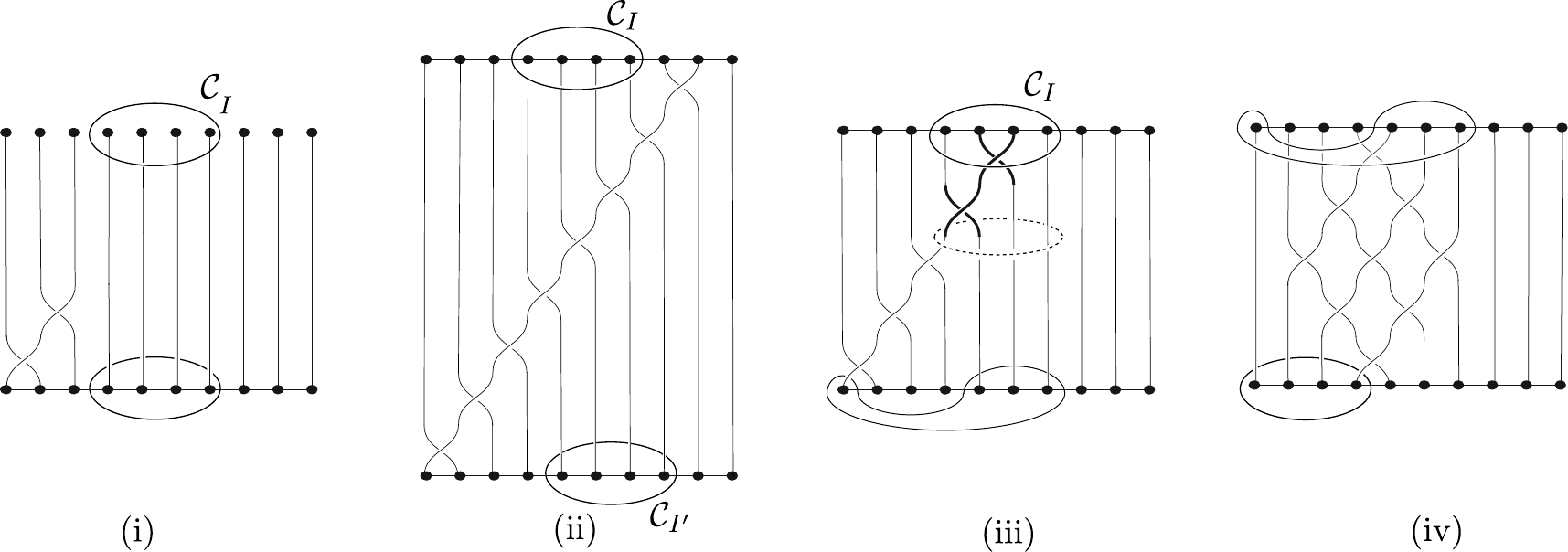}
\caption{In this example, we have $n=9$, $I=\{4,5,6\}$, so that $m=4$, $k=3$.
(i)  The braid $a_2\in A_{A_9}$; this picture illustrates Case (i) in Lemma \ref{L:ActionOfai}: $i_0+1=3<4=m$.
(ii) The braid $a_8\in A_{A_9}$; this picture illustrates Case (ii) in Lemma \ref{L:ActionOfai}: $i_0+1=9>7=m+k$.
(iii) The braid $a_{5}\in A_{A_9}$; this picture illustrates Case (iii) in Lemma \ref{L:ActionOfai}: $i_0+1=6\in [4,7]=[m,m+k]$; 
observe that the action of the bold crossings does not affect the curve $\mathcal C_I$.
(iv) The braid $\tau_I$: its action transforms $\mathcal C_I^{a_{m-1}}$ 
into $\mathcal C_{\{1,\ldots, k\}}$. }
\label{Figure:Tau}
\end{figure}

For each $I$, writing $m=\min(I)$, we define $\mathcal C'_I= \mathcal C_I^{a_{m-1}}$. Observe that $\mathcal C'_I$ is not a standard curve in general (see the bottom part of Figure \ref{Figure:Tau}(iii)); however it can be transformed into a standard curve under the action of a \emph{1-pure braid}. 
To be precise (see an example in Figure \ref{Figure:Tau}(iv)), the 1-pure braid
$$\tau_I=\begin{cases} (\sigma_{m}\ldots \sigma_{m+k-1})\ldots (\sigma_2\ldots \sigma_{k+1}) & \text{if $m>1$},\\ Id & \text{if $m=1$};\end{cases}$$ 
transforms the curve $\mathcal {C'}_I$ into the round curve $\mathcal C_{\{1,\ldots,k\}}$ which surrounds the $k+1$  first punctures. In other words, using the correspondence with proper irreducible parabolic subgroups, we get $A_I^{a_{m-1}\tau_I}=A_{\{1,\ldots,k\}}$. 

%Using the correspondence between curves and proper irreducible parabolic subgroups we can summarize the above observations as follows:  
%
%\begin{lemma}\label{LemmaTau} Let $I$ be a connected proper subinterval of $[n]$; let $m=\min(I)$ and $k=\#I$.
%Let $0\leqslant i_0\leqslant n$.  If $\max(I)<n$, let $I'=\{i+1,i\in I\}$. Then 
%\begin{itemize}
%\item[(i)] If $i_0+1<m$, then $A_I^{a_{i_0}}=A_I$ and for all $h\in A_I$, $a_{i_0}^{-1}ha_{i_0}=h$. 
%\item[(ii)] If $i_0+1>m+k$, then $A_I^{a_{i_0}}=A_{I'}$ and for all $h\in A_I$, $a_{i_0}^{-1}ha_{i_0}=sh(h)$.
%\item[(iii)] If $i_0+1\in [m,m+k]$, then $A_I^{a_{i_0}}=a_I^{a_{m-1}}$ and there exists $\tau\in \mathfrak P_1$ such that for all $h\in A_I$, $h^{a_{m-1}\tau}\in A_\{1,\ldots, k\}$.
%\end{itemize}
%\end{lemma}
%
%For the next lemma, denote by $sh$ the isomorphism between standard parabolic subgroups $A_{\{1,\ldots,n-1\}}\longrightarrow A_{\{2,\ldots,n\}}; \sigma_i\mapsto \sigma_{i+1}$.

For stating the next lemma, we denote by $sh$ the shift operator: 
\begin{center}
\begin{tabular}{rcl} $A_{\{1,\ldots,n-1\}}$ &  $\longrightarrow$ &   $A_{\{2,\ldots,n\}}$\\ $\sigma_i$ & $\longmapsto$  & $\sigma_{i+1}$.\end{tabular}
\end{center}

\begin{lemma}\label{LemmaTechnical}
Let $I$ be a proper connected subinterval of $[n]$; let $m=\min(I)$ and $k=\#I$. 
Let $0\leqslant i_0,j_0\leqslant n$.  Let $I'=\{i+1,i\in I\}$ (whenever $\max(I)<n$). 
Let $g\in A_I$. Suppose that $z=a_{i_0}^{-1}ga_{j_0}$ is \emph{1-pure}. 
\begin{itemize}
\item[1)] If $i_0+1<m$, then $z=g\in A_I$. 
\item[2)] If $i_0+1>m+k$, then $z=sh(g)\in A_{I'}$ 
\item[3)] If $i_0+1\in [m,m+k]$, then $\tau_I^{-1}z\tau_I\in A_{\{1,\ldots,k\}}$.
\end{itemize}
\end{lemma}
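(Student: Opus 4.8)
The plan is to extract from the 1-purity hypothesis a sharp constraint on $j_0$ by tracking a single puncture through the permutation $\pi_z$, and then to settle each case with a short braid computation. Recall that $\pi_{a_{i_0}}$ is the cycle sending $1\mapsto 2\mapsto\cdots\mapsto i_0+1\mapsto 1$ and fixing every puncture $>i_0+1$; in particular $j_0+1$ is the unique puncture sent to $1$ by $a_{j_0}$, and $\pi_{a_{i_0}^{-1}}$ sends $1$ to $i_0+1$. Since $g\in A_I$, the permutation $\pi_g$ permutes $\{m,\dots,m+k\}$ and fixes all other punctures. Now $z$ being 1-pure means exactly that $1$ is a fixed point of $\pi_z$, and tracking $1$ along $z=a_{i_0}^{-1}ga_{j_0}$ shows it is first sent to $i_0+1$ by $a_{i_0}^{-1}$, then to $(i_0+1)^g$ by $g$, and this must be returned to $1$ by $a_{j_0}$. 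Hence the hypothesis is equivalent to the single equation $(i_0+1)^g=j_0+1$.

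In Cases 1) and 2) the puncture $i_0+1$ lies outside $[m,m+k]$, so $(i_0+1)^g=i_0+1$ and the boxed equation forces $j_0=i_0$; thus $z=a_{i_0}^{-1}ga_{i_0}$. In Case 1), from $i_0+1<m$ every generator $\sigma_1,\dots,\sigma_{i_0}$ occurring in $a_{i_0}$ has index at most $m-2$, hence commutes with each of $\sigma_m,\dots,\sigma_{m+k-1}$; therefore $a_{i_0}$ commutes with $g\in A_I$ and $z=g$. In Case 2), I would first recall from Lemma~\ref{L:ActionOfai}(ii) that $A_I^{a_{i_0}}=A_{I'}$, which already gives $z\in A_{I'}$; to upgrade this to $z=sh(g)$ it suffices to verify on generators that $a_{i_0}^{-1}\sigma_j a_{i_0}=\sigma_{j+1}$ for $m\leqslant j\leqslant m+k-1$. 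This is the standard fact that conjugation by $a_{i_0}=\sigma_{i_0}\cdots\sigma_1$ shifts indices up by one in the range $j\leqslant i_0-1$ (here $j\leqslant m+k-1\leqslant i_0-1$, since $i_0+1>m+k$), proved by a one-line induction on the braid relation, as in the model identity $a_2^{-1}\sigma_1a_2=\sigma_2$; the bound $m+k\leqslant n$ guarantees $sh(g)$ is defined.

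For Case 3), the equation $(i_0+1)^g=j_0+1$ with $(i_0+1)^g\in\{m,\dots,m+k\}$ shows that $j_0+1\in[m,m+k]$ too, i.e. $j_0\in\{m-1,\dots,m+k-1\}$. The decisive algebraic content of Lemma~\ref{L:ActionOfai}(iii) is that when $i_0+1\in[m,m+k]$ one may split off the inner crossings as $a_{i_0}=p\,a_{m-1}$ with $p=\sigma_{i_0}\cdots\sigma_m\in A_I$ (read as the identity when $i_0=m-1$), and likewise $a_{j_0}=q\,a_{m-1}$ with $q=\sigma_{j_0}\cdots\sigma_m\in A_I$, both memberships resting on the index bounds just obtained. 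Substituting, the outer copies of $a_{m-1}^{\pm1}$ combine and
$$z=a_{i_0}^{-1}ga_{j_0}=a_{m-1}^{-1}\,(p^{-1}gq)\,a_{m-1}\in a_{m-1}^{-1}A_I a_{m-1}=A_I^{a_{m-1}},$$
because $p^{-1}gq\in A_I$. Conjugating by $\tau_I$ and invoking $A_I^{a_{m-1}\tau_I}=A_{\{1,\dots,k\}}$ then gives $\tau_I^{-1}z\tau_I\in A_{\{1,\dots,k\}}$.

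I expect the difficulty here to be organizational rather than conceptual: the crux is realizing that 1-purity alone pins down $j_0$ (equal to $i_0$ in Cases 1–2, and confined to $[m-1,m+k-1]$ in Case 3), after which Cases 1 and 3 fall out immediately from commutation and the factorization $a_{i_0}=p\,a_{m-1}$. The only genuinely computational step is the generator-level conjugation identity in Case 2 needed to replace the qualitative conclusion $z\in A_{I'}$ by the exact equality $z=sh(g)$.
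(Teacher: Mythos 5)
Your proposal is correct and follows essentially the same route as the paper: both extract $\pi_g(i_0+1)=j_0+1$ from 1-purity to pin down $j_0$, handle Cases 1 and 2 by commutation and the generator-level identity $a_{i_0}^{-1}\sigma_i a_{i_0}=\sigma_{i+1}$, and in Case 3 split off the inner crossings to write $z=a_{m-1}^{-1}g'a_{m-1}$ with $g'\in A_I$ before conjugating by $\tau_I$. The only cosmetic difference is that you phrase the Case 3 factorization as $a_{i_0}=p\,a_{m-1}$ rather than writing out the product of inverses explicitly.
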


\begin{proof}
First observe that as $z$ is 1-pure, we must have $\pi_g(i_0+1)=j_0+1$. Moreover, as $g\in A_I$, $\pi_g(i)=i$ for all $i\in [1,m-1]\cup[m+k+1,n+1]$. In particular, if $i_0+1<m$ or $i_0+1>m+k$, we must have $i_0+1=j_0+1$, whence $i_0=j_0$.

1) Suppose that $i_0+1<m$; as we have just seen, $z=a_{i_0}^{-1}ga_{i_0}$. But $a_{i_0}$ commutes with all letters $\sigma_i, i\in I$ whence $z=g$. 

2) Suppose that $i_0+1>m+k$; again $z=a_{i_0}^{-1}ga_{i_0}$. We have, for all $i\in I$, 
\begin{eqnarray*}
a_{i_0}^{-1}\sigma_ia_{i_0} & = &( \sigma_1^{-1}\ldots \sigma_{i_0}^{-1})\sigma_i(\sigma_{i_0}\ldots \sigma_1)\\
 & = &\sigma_1^{-1}\ldots\sigma_i^{-1}(\sigma_{i+1}^{-1}\sigma_i\sigma_{i+1})\sigma_i\ldots \sigma_1\\
  & = & \sigma_1^{-1}\ldots\sigma_i^{-1} (\sigma_i \sigma_{i+1}\sigma_i^{-1})\sigma_i\ldots \sigma_1\\
   & = & \sigma_{i+1},
\end{eqnarray*}
and the claim follows.\\
3) Suppose that $i_0+1\in[m,m+k]$; then also $j_0+1\in[m,m+k]$. 
We have 
\begin{eqnarray*}
z & = & a_{i_0}^{-1}ga_{j_0}\\
 & =  & (\sigma_1^{-1}\ldots\sigma_{m-1}^{-1})(\sigma_m^{-1}\ldots \sigma_{i_0}^{-1}g\sigma_{j_0}\ldots\sigma_{m})(\sigma_{m-1}\ldots \sigma_1)\\
  & = & (\sigma_{1}^{-1}\ldots \sigma_{m-1}^{-1}) g' (\sigma_{m-1}\ldots \sigma_1),\\
 \end{eqnarray*}
 where $g'=(\sigma_m^{-1}\ldots \sigma_{i_0}^{-1})g(\sigma_{j_0}\ldots\sigma_{m})\in A_I$ (the first and third factor may be trivial if $i_0+1=m$ or $j_0+1=m$) so that $z\in A_I^{a_{m-1}}$ and then by definition of $\tau_I$, $\tau_I^{-1}z\tau_I\in A_{\{1,\ldots,k\}}$ as claimed. 
\end{proof}

\subsection{The monomorphism $\eta$ and parabolic subgroups}

In this paragraph, we make some useful observations on the monomorphism $$\eta: A_{B_n}\longrightarrow A_{A_n}.$$ 
Recall that $Im(\eta)=\mathfrak P_1$ is the group of $(n+1)$-strands braids whose first strand ends at the first position, that is for $y\in A_{A_n}$, $y\in \mathfrak P_1$ if and only if $\pi_y(1)=1$. In this case, the element $\eta^{-1}(y)\in A_{B_n}$ is well-defined. 
%Denote by $\Delta_{B_n}$ and $\Delta_{A_n}$ the respective Garside elements of $A_{B_n}$ and $A_{A_n}$; then we have ${\eta(\Delta_{B_n})=\Delta_{A_n}^2}$. 

%For any braid $u\in A_{A_n}$, we denote by $\pi_u$ its projection in the symmetric group on $n+1$ objects; for any $i\in [n+1]$, $\pi_u(i)$ is the position of the $i$th strand of $u$ at the end of $u$; thus an $(n+1)$-strands braid $y$ is in the image of $\eta$ if and only if $\pi_y(1)=1$. 

The image of $\eta$ has index $n+1$ in $A_{A_n}$ and two braids $y_1,y_2$ are in the same coset if and only if $\pi_{y_1}(1)=\pi_{y_2}(1)$. 
%For $1\leqslant i \leqslant n$, we define $a_i=\sigma_i\ldots \sigma_1$ and $a_0$ is the trivial braid. Notice that $\pi_{a_i}(i+1)=1$, for all $0<i\leqslant n$.
%Given $y\in A_{A_n}$, there is a unique $i\in\{0,\ldots,n\}$ so that $ya_i\in \mathfrak P_1$. 
The braids $a_0,\ldots, a_n$ defined in Section~\ref{S:Curves} provide a finite system of coset representatives of $A_{A_n}$ modulo $\mathfrak P_1$. 
This allows to define a map $\psi:A_{A_n}\longrightarrow A_{B_n}$ in the following way. 
Given $y\in A_{A_n}$, let $i\in \{0,\ldots,n\}$ (it is unique!) be such that $ya_i\in \mathfrak P_1=Im (\eta)$, and define $\psi(y)=\eta^{-1}(ya_i)$.

Given a proper connected subinterval $I$ of $[n]$, we denote by $B_I$ the proper irreducible standard parabolic subgroup of $A_{B_n}$ generated by $\tau_i, i\in I$. The following is immediate; nevertheless we state it as a lemma, for future reference.

\begin{lemma} \label{L:XPAB}
\begin{itemize}

\item [(a)] Let $I$ be a proper connected subinterval of $[n]$;
then ${\eta(B_I)=A_I\cap \mathfrak P_1}$. 
\item[(b)] $\eta(\Delta_{B_n})=\Delta_{A_n}^2$.
\end{itemize}
In particular, $$\eta(\mathcal P(B_n))=(\mathcal P(A_n)\cap \mathfrak P_1)\setminus \{\Delta_{A_n}^{4k+2}, k\in \mathbb Z\}.$$ 
\end{lemma}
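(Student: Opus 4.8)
The plan is to establish (a) and (b) and then read off the displayed identity by a direct set computation.

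For (a), I would distinguish two cases according to whether $1\in I$. If $1\notin I$, connectedness gives $I\subseteq\{2,\ldots,n\}$, every $\tau_i$ with $i\in I$ satisfies $\eta(\tau_i)=\sigma_i$, so $\eta(B_I)=\langle\sigma_i:i\in I\rangle=A_I$; since none of these generators moves the first strand we have $A_I\subseteq\mathfrak P_1$, whence $\eta(B_I)=A_I=A_I\cap\mathfrak P_1$. If $1\in I$, then $I=\{1,\ldots,k\}$ with $k<n$, and the restriction of $\eta$ to $B_I\cong A_{B_k}$ is precisely the analogous monomorphism into $A_I\cong A_{A_k}$ (it sends $\tau_1\mapsto\sigma_1^2$ and $\tau_i\mapsto\sigma_i$ for $2\leqslant i\leqslant k$). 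By the cited isomorphism $\mathrm{Im}(\eta)=\mathfrak P_1$ applied to $k+1$ strands \cite{Peifer}, its image is exactly the group of $1$-pure braids on the strands $1,\ldots,k+1$; as an element of $A_I$ leaves the strands $k+2,\ldots,n+1$ straight, this is exactly $A_I\cap\mathfrak P_1$. In both cases $\eta(B_I)=A_I\cap\mathfrak P_1$.

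For (b), the main point is to identify $\eta(\Delta_{B_n})$ as a central element and then fix the exponent by an abelianization count. Since in type $B_n$ the longest element is $w_0=-\mathrm{id}$, the Garside element $\Delta_{B_n}$ is central in $A_{B_n}$ and generates its infinite cyclic center; as $\eta$ is an isomorphism onto $\mathfrak P_1$, the element $\eta(\Delta_{B_n})$ generates $Z(\mathfrak P_1)$. The full twist $\Delta_{A_n}^2$ is central in all of $A_{A_n}$ and is pure, hence lies in $Z(\mathfrak P_1)=\langle\eta(\Delta_{B_n})\rangle$, so $\Delta_{A_n}^2=\eta(\Delta_{B_n})^m$ for some $m$. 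To determine $m$ I would apply the exponent-sum homomorphism $e\colon A_{A_n}\to\mathbb Z$, $\sigma_i\mapsto1$. On one hand $e(\Delta_{A_n}^2)=2\binom{n+1}{2}=n(n+1)$. On the other, $\Delta_{B_n}$ is a positive word of length $\ell(w_0)=n^2$ in the $\tau_i$, and since $\eta(\tau_1)=\sigma_1^2$ while $\eta(\tau_i)=\sigma_i$ for $i\geqslant2$, one gets $e(\eta(\Delta_{B_n}))=n^2+p$, where $p$ is the number of occurrences of $\tau_1$ in $\Delta_{B_n}$. Reading off the inversion sequence of $w_0$ and using that reflections preserve root length, the letters equal to $s_1$ are in bijection with the short positive roots $e_1,\ldots,e_n$ of the $B_n$ system; hence $p=n$ and $e(\eta(\Delta_{B_n}))=n^2+n=n(n+1)$. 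Comparing, $m=1$ and $\eta(\Delta_{B_n})=\Delta_{A_n}^2$.

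Finally, for the displayed identity I would expand both sides using $\mathcal P(B_n)=\bigl(\bigcup_I B_I\bigr)\cup\langle\Delta_{B_n}^2\rangle$. By (a), (b) and distributivity of $\cap$ over $\cup$, together with $\eta(\Delta_{B_n}^2)=\Delta_{A_n}^4$, the left-hand side becomes $\bigl[\bigl(\bigcup_I A_I\bigr)\cap\mathfrak P_1\bigr]\cup\{\Delta_{A_n}^{4k}:k\in\mathbb Z\}$. On the right-hand side, every power of the full twist is pure, so $\mathcal P(A_n)\cap\mathfrak P_1=\bigl[\bigl(\bigcup_I A_I\bigr)\cap\mathfrak P_1\bigr]\cup\{\Delta_{A_n}^{2j}:j\in\mathbb Z\}$, and deleting the odd powers $\{\Delta_{A_n}^{4k+2}\}$ leaves the same set, provided no $\Delta_{A_n}^{4k+2}$ lies in a proper $A_I$. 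Verifying this last point is the only real obstacle, and I would settle it with linking numbers: $\Delta_{A_n}^{4k+2}$ is pure with all pairwise linking numbers equal to the same nonzero integer, whereas any element of a proper $A_I$ leaves some strand straight, so that strand has vanishing linking number with every other; thus no such power can belong to a proper parabolic, and the two sides coincide.
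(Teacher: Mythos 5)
Your proof is correct. Note that the paper offers no argument for this lemma at all --- it is stated with the remark that it is ``immediate'' --- so there is no proof to compare yours against; what you have written is a legitimate filling-in of the details. Your case split in (a) is the right one, and the key point (that for $I=\{1,\dots,k\}$ the restriction of $\eta$ to $B_I\cong A_{B_k}$ is the analogous monomorphism onto the $1$-pure braids on $k+1$ strands, which inside $A_{A_n}$ is exactly $A_I\cap\mathfrak P_1$) is exactly what makes the statement true. For (b), the centre-plus-abelianization argument works, including the count $p=n$ via short positive roots (and that count is insensitive to the $B_n$/$C_n$ convention, since either way exactly $n$ positive roots share the length of $\alpha_1$); a more pedestrian alternative is the direct identity $\eta((\tau_1\cdots\tau_n)^n)=(\sigma_1\cdots\sigma_n)^{n+1}$, i.e. $(\sigma_1\delta)^n=\delta^{n+1}$ for $\delta=\sigma_1\cdots\sigma_n$. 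Finally, you are right that the only genuinely non-obvious point in the displayed identity is that no $\Delta_{A_n}^{4k+2}$ lies in a proper standard parabolic $A_I$, and the linking-number argument settles it cleanly. If anything, your write-up does more work than the author deemed necessary, but every step is sound.
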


\begin{lemma}\label{L:XNPAB}
We have $\eta(\mathcal{NP}(B_n))=\mathcal {NP}(A_n)\cap \mathfrak P_1$. 
\end{lemma}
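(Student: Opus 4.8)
The plan is to prove the equality $\eta(\mathcal{NP}(B_n)) = \mathcal{NP}(A_n)\cap \mathfrak P_1$ by a double inclusion, exploiting the fact (noted in the paper) that the normalizer of a parabolic subgroup is exactly the stabilizer of the corresponding curve, and that $\eta$ identifies $A_{B_n}$ with the $1$-pure braids $\mathfrak P_1$. Recall that $\mathcal{NP}(A)$ is the set of elements normalizing \emph{some} proper irreducible standard parabolic subgroup; so the statement says that a $1$-pure braid normalizes a proper irreducible standard parabolic of $A_{A_n}$ if and only if, viewed as an element of $A_{B_n}$, it normalizes a proper irreducible standard parabolic of $A_{B_n}$.

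For the inclusion $\eta(\mathcal{NP}(B_n)) \subseteq \mathcal{NP}(A_n)\cap \mathfrak P_1$, I would start with $w\in A_{B_n}$ normalizing some $B_J$ (a proper irreducible standard parabolic of $A_{B_n}$) and show $\eta(w)$ normalizes a suitable $A_I$. The natural guess is that $B_J$ and $A_J$ should correspond under $\eta$ via Lemma~\ref{L:XPAB}(a), which gives $\eta(B_J)=A_J\cap\mathfrak P_1$. The key point is that the normalizer of $B_J$ in $A_{B_n}$ maps under $\eta$ into the normalizer of $A_J$ in $A_{A_n}$: indeed, if $w^{-1}B_J w = B_J$, then applying $\eta$ yields $\eta(w)^{-1}(A_J\cap\mathfrak P_1)\eta(w)=A_J\cap\mathfrak P_1$, and I would need to upgrade this to an equality of the \emph{full} parabolic $A_J$ rather than just its $1$-pure part. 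This is where the geometric/curve picture is useful: the subgroup $A_J\cap\mathfrak P_1$ determines $A_J$ (it is the parabolic closure, or equivalently $A_J$ is recovered as the parabolic subgroup supported inside the same curve $\mathcal C_J$), so conjugation preserving $A_J\cap\mathfrak P_1$ forces conjugation preserving $A_J$. Hence $\eta(w)\in N_{A_n}(A_J)\cap\mathfrak P_1\subseteq \mathcal{NP}(A_n)\cap\mathfrak P_1$.

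For the reverse inclusion $\mathcal{NP}(A_n)\cap\mathfrak P_1 \subseteq \eta(\mathcal{NP}(B_n))$, I would take a $1$-pure braid $y$ normalizing some proper irreducible parabolic subgroup $P=A_I^u$ of $A_{A_n}$, i.e. $y$ stabilizes the curve $\mathcal C_I^u$. Using the coset machinery from Section~\ref{S:Curves} --- the representatives $a_0,\dots,a_n$ and the straightening braids $\tau_I$ --- together with Lemma~\ref{LemmaTechnical}, I would reduce to the case where the normalized parabolic is a standard one $A_J$ with $1\in\{1,\dots,k\}$ (so that $\mathcal C_J$ surrounds the first puncture). The crucial observation is that a $1$-pure braid stabilizing a curve must stabilize a curve enclosing puncture $1$ in a compatible way: since $y$ fixes the first strand's endpoint, the normalizer decomposes so that $\eta^{-1}(y)$ normalizes the corresponding $B_J$ in $A_{B_n}$. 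Concretely, I would write $y\in N_{A_n}(A_J)\cap\mathfrak P_1$ and check, via Lemma~\ref{L:XPAB}(a) again, that $\eta^{-1}(y)$ conjugates $B_J=\eta^{-1}(A_J\cap\mathfrak P_1)$ to itself, placing $\eta^{-1}(y)\in\mathcal{NP}(B_n)$.

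The main obstacle I anticipate is the bookkeeping in the reverse inclusion: given an \emph{arbitrary} proper irreducible parabolic $P=A_I^u$ normalized by a $1$-pure $y$, one must show that $P$ can be taken, after adjusting by cosets, to be of a form where puncture $1$ sits inside the enclosing curve (otherwise $\eta^{-1}$ has no natural target parabolic in $A_{B_n}$). The subtlety is that $A_{B_n}$ ``sees'' only curves compatible with the fixed puncture $1$ --- the braid group $A_{B_n}\cong\mathfrak P_1$ is the mapping class group of the disk with a distinguished puncture --- so I must verify that if $y$ is $1$-pure and stabilizes $\mathcal C$, then $\mathcal C$ (or a canonically associated curve) is adapted to puncture $1$. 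I expect Lemma~\ref{L:ActionOfai} and Lemma~\ref{LemmaTechnical}, which precisely control how the coset representatives $a_{i_0}$ and the straighteners $\tau_I$ interact with $1$-purity, to do exactly this reduction, so that the proof ultimately amounts to carefully chaining those two lemmas rather than introducing new ideas.
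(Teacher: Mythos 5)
Your proposal is essentially correct but follows a genuinely different route from the paper's. For the forward inclusion, the paper does not pass through $A_J\cap\mathfrak P_1$ at all: it invokes Paris's normalizer decomposition \cite[Theorem 6.3]{Paris} to write $x=uv$ with $u$ centralizing the generators of $B_J$ and $v\in B_J$, and then uses the Fenn--Rolfsen--Zhu fact that $\sigma_1$ and $\sigma_1^2$ have the same centralizer to transport the decomposition through $\eta$ and conclude that $\eta(x)$ normalizes $A_J$. Your alternative --- deduce $\eta(w)^{-1}(A_J\cap\mathfrak P_1)\eta(w)=A_J\cap\mathfrak P_1$ and upgrade to $A_J$ because $A_J\cap\mathfrak P_1$ determines $A_J$ --- is sound (when $1\notin J$ the two subgroups coincide; when $1\in J$ the intersection has finite index and contains $\sigma_1^2$ and the $\sigma_j$, $j\in J\setminus\{1\}$, whose supports fill the subdisk bounded by $\mathcal C_J$, so any parabolic containing it corresponds to a curve enclosing $\mathcal C_J$ with the same number of punctures), but you should spell this filling argument out since it replaces the paper's citation of \cite{Paris} and \cite{FRZ}. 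For the reverse inclusion, the "main obstacle" you anticipate is a red herring: $\mathcal{NP}(A_n)$ is by definition the set of elements normalizing some proper irreducible \emph{standard} parabolic $A_I$, so there is no arbitrary conjugate $A_I^u$ to standardize and no need for Lemmas \ref{L:ActionOfai} or \ref{LemmaTechnical} (those are used in Propositions \ref{Prop:PsiLipschitz} and \ref{Prop:PsiNPLipschitz}, not here). Once this is recognized, your closing observation gives a clean two-line argument: $y\in\mathfrak P_1$ normalizes both $A_I$ and $\mathfrak P_1$, hence normalizes $A_I\cap\mathfrak P_1=\eta(B_I)$, so $\eta^{-1}(y)$ normalizes $B_I$. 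This is arguably simpler than the paper's reverse inclusion, which again decomposes $y=uv$ via Paris's theorem and must handle the case where $u,v$ fail to be $1$-pure (forcing $I=\{1\}$ and a rewriting $y=(u\sigma_1)(\sigma_1^{-1}v)$). What your approach buys is economy of external input (only the curve correspondence and Lemma \ref{L:XPAB}(a)); what the paper's buys is a purely algebraic argument that produces the explicit normalizer decomposition in $A_{B_n}$.
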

\begin{proof}
Let $x\in \mathcal{NP}(B_n)$; this means that $x$ normalizes $B_I$ for some $I$. According to \cite[Theorem 6.3]{Paris}, 
$x=uv$, where $u$ commutes with all $\tau_i, i\in I$ and $v\in B_I$. Then 
$\eta(x)=\eta(u)\eta(v)$, where $\eta(u)$ commutes with all $\eta(\tau_i),i\in I$ and $\eta(v)\in A_I$, by Lemma \ref{L:XPAB}(a). As the centralizers of $\sigma_1$ and $\sigma_1^2$ are the same \cite[Theorem 2.2]{FRZ}, $\eta(u)$ commutes with all $\sigma_i,i\in I$ and it follows again from \cite[Theorem 6.3]{Paris} that $\eta(x)$ normalizes~$A_I$; that is $\eta(x)\in \mathcal{NP}(A_n)$. 

Conversely, if $y\in \mathfrak P_1$ normalizes $A_I$, then \cite[Theorem 6.3]{Paris} says that $y=uv$, where $u$ commutes with all $\sigma_i, i\in I$ and $v\in A_I$. 
Suppose that $u,v\notin \mathfrak P_1$; then $1\in I$. As $u$ commutes with $\sigma_1$ and $\pi_u(1)\neq 1$, we must have $\pi_u(1)=2$ and $\pi_u(2)=1$ (observe that $\pi_{\sigma_1}$ is the transposition $[1,2]$) but then 
$u$ cannot commute with $\sigma_2$ whence $I=\{1\}$. We then may write $y=uv=(u\sigma_1)(\sigma_1^{-1}v)$ where $u'=u\sigma_1\in \mathfrak P_1$ commutes with all $\sigma_i, i\in I$ and $v'=\sigma_1^{-1}v\in \mathfrak P_1\cap A_I$. Therefore we may always suppose that $u,v\in \mathfrak P_1$. Taking $a=\eta^{-1}(u)$ and $b=\eta^{-1}(v)$, we get that $a$ commutes with all $\tau_i, i\in I$ and $b\in B_I$ (Lemma~\ref{L:XPAB}(a)). Thus $ab$ normalizes $B_I$ (by \cite[Theorem 6.3]{Paris}), that is, $ab\in \mathcal{NP}(B_n)$ and $y=\eta(ab)\in \eta(\mathcal{NP}(B_n))$.
\end{proof}

\begin{lemma}\label{L:Length}
For all $0\leqslant i\leqslant n$, for all proper connected subinterval $I$ of $[n]$, we have 
\begin{itemize}
\item[(i)] $||a_i||_{\mathcal P(A_n)}\leqslant 2$,
\item[(ii)] $||a_i||_{\mathcal{NP}(A_n)}\leqslant 2$.
\item[(iii)] $||\eta^{-1}(\tau_I)||_{\mathcal{P}(B_n)}=||\eta^{-1}(\tau_I)||_{\mathcal{NP}(B_n)}\leqslant1$. 
\end{itemize}
\end{lemma}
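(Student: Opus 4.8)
The plan is to bound the three norms by exhibiting each of the relevant elements as a short product of generators from the respective generating sets $\mathcal P$ and $\mathcal{NP}$. For part (i), I would write each $a_i=\sigma_i\cdots\sigma_1$ and observe that it factors as a product of \emph{two} elements, each lying in a single proper irreducible standard parabolic subgroup. Indeed $a_i=(\sigma_i\cdots\sigma_2)\,\sigma_1$, where the first factor $\sigma_i\cdots\sigma_2\in A_{\{2,\ldots,i\}}$ and the second factor $\sigma_1\in A_{\{1\}}$; both of these subgroups are proper (as long as $i<n$, and for $i=n$ one reorganizes the split so that no factor is all of $A_{A_n}$) and irreducible standard parabolic, hence each factor is an element of $\mathcal P(A_n)$. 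Since $a_0=\mathrm{Id}$ has norm $0$, this gives $\|a_i\|_{\mathcal P(A_n)}\leqslant 2$ for all $i$. I would take care with the boundary case $i=n$, where $A_{\{2,\ldots,n\}}$ is still a proper parabolic of $A_{A_n}$, so the same two-factor splitting works.

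For part (ii), I would use the containment $\mathcal P(A_n)\subset\mathcal{NP}(A_n)$ noted in the introduction: every proper irreducible standard parabolic subgroup is contained in its own normalizer, so each of the two factors above also lies in $\mathcal{NP}(A_n)$, giving $\|a_i\|_{\mathcal{NP}(A_n)}\leqslant\|a_i\|_{\mathcal P(A_n)}\leqslant 2$ immediately. Thus (ii) is a formal consequence of (i) and requires no new computation.

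For part (iii), the element $\eta^{-1}(\tau_I)\in A_{B_n}$ is the preimage under $\eta$ of the $1$-pure braid $\tau_I$. The key point is that $\tau_I=(\sigma_m\cdots\sigma_{m+k-1})\cdots(\sigma_2\cdots\sigma_{k+1})$ (when $m>1$; trivial when $m=1$) is a product built entirely from generators $\sigma_j$ with $j\geqslant 2$, so $\tau_I$ visibly lies in the single proper irreducible standard parabolic subgroup $A_{\{2,\ldots,n\}}\cap\mathfrak P_1$ — indeed all indices appearing are at least $2$ and at most $n$, and $\tau_I$ fixes the first strand. By Lemma~\ref{L:XPAB}(a), $A_{\{2,\ldots,n\}}\cap\mathfrak P_1=\eta(B_{\{2,\ldots,n\}})$, so $\eta^{-1}(\tau_I)$ lies in the single proper irreducible standard parabolic subgroup $B_{\{2,\ldots,n\}}$ of $A_{B_n}$. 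Hence $\eta^{-1}(\tau_I)\in\mathcal P(B_n)\subset\mathcal{NP}(B_n)$, giving norm $\leqslant 1$ for both metrics, and the equality of the two norms follows because a single-generator element has the same (unit or zero) norm in either generating set.

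The only genuine obstacle I anticipate is the bookkeeping in part (i): one must verify that neither factor in the splitting of $a_i$ is ever the full group $A_{A_n}$, i.e. that the standard parabolic subgroups used are always \emph{proper} and \emph{irreducible}, across all values of $i$ including the extreme cases $i=1$ and $i=n$. This is elementary but must be checked uniformly; once the splitting $a_i=(\sigma_i\cdots\sigma_2)\sigma_1$ is in hand, the rest of the argument is purely a matter of reading off which parabolic subgroup each factor belongs to.
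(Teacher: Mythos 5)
Your proposal is correct and follows essentially the same route as the paper: write $a_i$ as a product of at most two elements each lying in a proper irreducible standard parabolic subgroup, deduce (ii) from the containment $\mathcal P(A_n)\subset\mathcal{NP}(A_n)$, and observe that $\eta^{-1}(\tau_I)\in B_{\{2,\ldots,n\}}$ for (iii). The only cosmetic difference is that the paper notes $a_i\in A_{\{1,\ldots,i\}}$ (norm $1$) for $i<n$ and splits only $a_n=\sigma_n a_{n-1}$, whereas your uniform splitting $a_i=(\sigma_i\cdots\sigma_2)\,\sigma_1$ handles all cases, including $i=n$, at once.
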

\begin{proof}
If $i<n$, then $a_i\in A_{\{1,\ldots, i\}}\subset N_{A_n}(A_{\{1,\ldots, i\}})$ and ${||a_i||_{\mathcal P(A_n)}=||a_i||_{\mathcal{NP}(A_n)}=1}$; if $i=n$, we can write $a_n=\sigma_n a_{n-1}$ and claims (i), (ii) follow. 
For (iii), it is enough to observe that $\eta^{-1}(\tau_I)$ belongs to $B_{\{2,\ldots,n\}}$. 
\end{proof}

\subsection{Proof of Proposition \ref{Prop:XPHyp}}

The two next statements will immediately imply Proposition \ref{Prop:XPHyp}.
%For simplicity, we denote by $d_{A,P}$ the distance $d_{X_P^{A_{A_n}}}$ on $A_{A_n}$ and by $d_{B,P}$ the distance 
%$d_{X_P^{A_{B_n}}}$ on $A_{B_n}$
%We will see that $\eta$ gives the desired quasi-isometry between $(A_{B_n},d_{\mathcal P(B_n)})$ and $(A_{A_n},d_{\mathcal P(A_n)})$. 
%For $x\in A_{B_n}$, we denote by $||x||_{P,B}$ the word length of $x$ with respect to $X_{P}^{A_{B_n}}$; similarly, for $y\in A_{A_n}$, we denote by $||y||_{P,A}$ the word length of $y$ with respect to $X_{P}^{A_{A_n}}$.  

\begin{proposition}\label{Prop:EtaLipschitz}
The homomorphism $\eta$ defines a 1-Lipschitz map 
$$(A_{B_n},d_{\mathcal P(B_n)})\longrightarrow (A_{A_n},d_{\mathcal P(A_n)})$$ for which the map $\psi$ is a quasi-inverse. 
\end{proposition}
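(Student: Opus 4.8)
The plan is to establish the two halves of a quasi-isometry separately: first that $\eta$ is $1$-Lipschitz, and then that $\psi$ is a quasi-inverse (i.e., it is coarsely Lipschitz and composes with $\eta$ to something at bounded distance from the identity on both sides). For the $1$-Lipschitz claim, I would take a geodesic word for $x\in A_{B_n}$ in the generating set $\mathcal P(B_n)$, so $x=s_1\cdots s_\ell$ with each $s_j\in\mathcal P(B_n)$ and $\ell=\lVert x\rVert_{\mathcal P(B_n)}$. Applying $\eta$ gives $\eta(x)=\eta(s_1)\cdots\eta(s_\ell)$, so it suffices to check that $\eta$ sends each generator of $\mathcal P(B_n)$ to a single generator of $\mathcal P(A_n)$, i.e.\ $\eta(\mathcal P(B_n))\subseteq\mathcal P(A_n)$. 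This is exactly the content packaged in Lemma~\ref{L:XPAB}: part (a) gives $\eta(B_I)=A_I\cap\mathfrak P_1\subseteq A_I$ so each proper irreducible standard parabolic generator of $A_{B_n}$ maps into the corresponding one of $A_{A_n}$, and part (b) gives $\eta(\Delta_{B_n})=\Delta_{A_n}^2$, so the generator $\Delta_{B_n}^{2}$ of the distinguished cyclic subgroup maps to $\Delta_{A_n}^{4}$, which is still a power of $\Delta_{A_n}^2$ and hence a single generator in $\mathcal P(A_n)$. Thus $\lVert\eta(x)\rVert_{\mathcal P(A_n)}\leqslant\ell=\lVert x\rVert_{\mathcal P(B_n)}$, giving the $1$-Lipschitz bound.

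For the quasi-inverse statement I would argue that $\psi$ is coarsely Lipschitz and that $\psi\circ\eta$ and $\eta\circ\psi$ each move points a bounded amount. The key structural fact is that $\psi(y)=\eta^{-1}(ya_i)$ where $a_i$ is the unique coset representative with $ya_i\in\mathfrak P_1$, and $a_i\in\{a_0,\dots,a_n\}$ is drawn from a \emph{finite} set whose $\mathcal P(A_n)$-norm is uniformly bounded by Lemma~\ref{L:Length}(i). So to bound $\lVert\psi(y_1)^{-1}\psi(y_2)\rVert_{\mathcal P(B_n)}$ I would write $\psi(y_1)^{-1}\psi(y_2)=\eta^{-1}\bigl(a_{i_1}^{-1}y_1^{-1}y_2a_{i_2}\bigr)$ and then compare, via $\eta^{-1}$, the $\mathcal P(B_n)$-length of this element to the $\mathcal P(A_n)$-length of $y_1^{-1}y_2$, paying only the constant cost of inserting the two bounded-length representatives $a_{i_1}^{-1}$ and $a_{i_2}$. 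The composition $\psi\circ\eta$ is essentially the identity: for $x\in A_{B_n}$ we have $\eta(x)\in\mathfrak P_1$, so the required coset representative is $a_0=\mathrm{Id}$ and $\psi(\eta(x))=\eta^{-1}(\eta(x))=x$ exactly; and $\eta(\psi(y))=ya_i$ lies within bounded $\mathcal P(A_n)$-distance of $y$ by Lemma~\ref{L:Length}(i).

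The real work, and the step I expect to be the main obstacle, is controlling how $\eta^{-1}$ interacts with the word metrics — concretely, showing that a short $\mathcal P(A_n)$-expression of an element of $\mathfrak P_1$ can be converted into a $\mathcal P(B_n)$-expression of its $\eta$-preimage whose length is bounded linearly (with additive constant) in terms of the original. The difficulty is that a geodesic $\mathcal P(A_n)$-word $y_1^{-1}y_2=t_1\cdots t_r$ passes through generators $t_j$ (elements of various standard parabolics $A_I$, or powers of $\Delta_{A_n}^2$) that need not individually lie in $\mathfrak P_1$, so one cannot simply apply $\eta^{-1}$ factor by factor. The plan here is to interleave the coset representatives $a_0,\dots,a_n$ between consecutive letters: writing each partial product in the form $(\text{element of }\mathfrak P_1)\cdot a_{i}$ and inserting $a_i a_i^{-1}$ pairs, one expresses $t_j$ sandwiched between two coset representatives as a $1$-pure element, which is precisely the situation handled by Lemma~\ref{LemmaTechnical}. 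That lemma then guarantees each sandwiched letter $a_{i}^{-1}t_j a_{i'}$ lies (after the explicit conjugation by $\tau_I$) in a standard parabolic of $A_{A_n}$ contained in $\mathfrak P_1$, so $\eta^{-1}$ applies to it and, by Lemma~\ref{L:Length}(iii), contributes boundedly to the $\mathcal P(B_n)$-length; summing over the $r$ letters and accounting for the $\tau_I$-conjugators (again of bounded $\mathcal P(B_n)$-norm) yields the desired linear bound $\lVert\psi(y_1)^{-1}\psi(y_2)\rVert_{\mathcal P(B_n)}\leqslant C\,\lVert y_1^{-1}y_2\rVert_{\mathcal P(A_n)}+C'$. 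Assembling the Lipschitz bound, the coarse-Lipschitz bound for $\psi$, and the two bounded-distance composition estimates then certifies $\eta$ as a quasi-isometry with quasi-inverse $\psi$.
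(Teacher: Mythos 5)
Your proposal is correct and follows essentially the same route as the paper: the $1$-Lipschitz bound comes from $\eta(\mathcal P(B_n))\subseteq\mathcal P(A_n)$ (Lemma~\ref{L:XPAB}) applied letter by letter, $\psi\circ\eta=\mathrm{id}$ holds exactly, and $\eta\circ\psi$ is within distance $2$ of the identity by Lemma~\ref{L:Length}(i). The only difference is one of bookkeeping: the substantial work you flag at the end --- showing $\psi$ is Lipschitz via the sandwiching of generators between coset representatives, Lemma~\ref{LemmaTechnical} and Lemma~\ref{L:Length}(iii) --- is not part of the paper's proof of this proposition but is deferred to the separate Proposition~\ref{Prop:PsiLipschitz}, where it is carried out exactly as you sketch (plus the central case $g=\Delta_{A_n}^{2k}$, which Lemma~\ref{LemmaTechnical} does not cover and must be treated separately).
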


\begin{proof}
The first part of the statement follows immediately from Lemma \ref{L:XPAB}, as~$\eta$ is a homomorphism. 
%Let $x,x'\in A_{B_n}$ and suppose that $d_{\mathcal P(B_n)}(x,x')\leqslant 1$, that is $x^{-1}x'\in \mathcal P(B_n)$. Then either $x^{-1}x'=\Delta_{B_n}^{2k}$ for some $k\in \mathbb Z$, or $x^{-1}x'\in B_I$ for some~$I$. 
%In the first case, $\eta(x)^{-1}\eta(x')=\eta(x^{-1}x')=\Delta_{A_n}^{4k},$
% in the second case, $\eta(x)^{-1}\eta(x')\in A_I$. In both cases, $\eta(x)^{-1}\eta(x')\in \mathcal P(A_n)$ and $d_{\mathcal P(A_n)}(\eta(x),\eta(x'))\leqslant 1$. 
% It follows that for each $x,x'\in A_{B_n}$, we have 
% $$d_{\mathcal P(A_n)}(\eta(x),\eta(x'))\leqslant d_{\mathcal P(B_n)}(x,x')$$
% and $\eta$ is 1-Lipschitz.
Let us show the second part of the statement.
By construction, for $x\in A_{B_n}$, $\psi\circ\eta(x)=x$. For $y\in A_{A_n}$, $y$ and $\eta\circ\psi(y)$ differ
by $a_i$ for some $0\leqslant i\leqslant n$; Lemma \ref{L:Length}(i) then says that
$Id_{A_{A_n}}$ and $\eta\circ\psi$ are at distance at most $2$ apart.
\end{proof}

\begin{proposition}\label{Prop:PsiLipschitz}
The map $\psi: (A_{A_n},d_{\mathcal P(A_n)})\longrightarrow (A_{B_n},d_{\mathcal P(B_n)})$ is Lipschitz.
\end{proposition}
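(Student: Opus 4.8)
The plan is to exploit the standard principle that, for a word metric, a map is Lipschitz as soon as its displacement under right-multiplication by a single generator is uniformly bounded. Since the word metric $d_{\mathcal P(A_n)}$ is left-invariant, given $y_1,y_2\in A_{A_n}$ I would write $y_1^{-1}y_2=s_1\cdots s_\ell$ as a geodesic word in $\mathcal P(A_n)$ (so that $\ell=d_{\mathcal P(A_n)}(y_1,y_2)$) and travel along the points $y_1,y_1s_1,\ldots,y_1s_1\cdots s_\ell=y_2$; by the triangle inequality for $d_{\mathcal P(B_n)}$ it then suffices to bound $d_{\mathcal P(B_n)}\big(\psi(w),\psi(ws)\big)$ by a constant $C$ for every $w\in A_{A_n}$ and every generator $s\in\mathcal P(A_n)$. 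The key computation is that if $i,j\in\{0,\ldots,n\}$ are the unique indices with $wa_i,wsa_j\in\mathfrak P_1$, then, since $\eta^{-1}$ is a homomorphism on $\mathfrak P_1$,
$$\psi(w)^{-1}\psi(ws)=\eta^{-1}(wa_i)^{-1}\eta^{-1}(wsa_j)=\eta^{-1}\big(a_i^{-1}sa_j\big),$$
and $z:=a_i^{-1}sa_j=(wa_i)^{-1}(wsa_j)\in\mathfrak P_1$ is $1$-pure. Everything thus reduces to bounding $\|\eta^{-1}(z)\|_{\mathcal P(B_n)}$, and there are two kinds of generators $s\in\mathcal P(A_n)$ to treat.

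First suppose $s\in A_I$ for a proper connected subinterval $I$, with $m=\min(I)$ and $k=\#I$. Then $z=a_i^{-1}sa_j$ is exactly of the form handled by Lemma~\ref{LemmaTechnical} (with $g=s$, $i_0=i$, $j_0=j$). In the first two cases of that lemma $z$ lies in $A_I$ or in $A_{I'}$; being also $1$-pure, Lemma~\ref{L:XPAB}(a) gives $\eta^{-1}(z)\in B_I$ or $\eta^{-1}(z)\in B_{I'}$, so $\|\eta^{-1}(z)\|_{\mathcal P(B_n)}\le 1$. In the third case $\tau_I^{-1}z\tau_I\in A_{\{1,\ldots,k\}}$; here $\tau_I$, $\tau_I^{-1}z\tau_I$ and $\tau_I^{-1}$ are all $1$-pure, so applying the homomorphism $\eta^{-1}$ factorwise yields
$$\eta^{-1}(z)=\eta^{-1}(\tau_I)\,\eta^{-1}\big(\tau_I^{-1}z\tau_I\big)\,\eta^{-1}(\tau_I)^{-1},$$
where the middle factor lies in $B_{\{1,\ldots,k\}}$ by Lemma~\ref{L:XPAB}(a) (norm $\le 1$) and the two outer factors have norm $\le 1$ by Lemma~\ref{L:Length}(iii). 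Hence $\|\eta^{-1}(z)\|_{\mathcal P(B_n)}\le 3$ in this case.

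The remaining generators are the powers $s=\Delta_{A_n}^{2q}$ of the central element $\Delta_{A_n}^2$. Here I would use that $\Delta_{A_n}^2$ is central and pure: multiplying $w$ by the pure braid $\Delta_{A_n}^{2q}$ does not change $\pi_w(1)$, hence does not change the coset index, so $j=i$, and by centrality $z=a_i^{-1}\Delta_{A_n}^{2q}a_i=\Delta_{A_n}^{2q}$. By Lemma~\ref{L:XPAB}(b) we get $\eta^{-1}(z)=\Delta_{B_n}^{q}$, whose $\mathcal P(B_n)$-norm is at most $1+\|\Delta_{B_n}\|_{\mathcal P(B_n)}$: writing $q=2r$ or $q=2r+1$ and using $\Delta_{B_n}^{2r}\in\langle\Delta_{B_n}^2\rangle\subset\mathcal P(B_n)$ reduces the estimate to the single fixed element $\Delta_{B_n}$, which has finite norm because $\mathcal P(B_n)$ generates $A_{B_n}$. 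This bound is therefore a finite constant independent of $q$.

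Combining the two cases, $d_{\mathcal P(B_n)}(\psi(w),\psi(ws))\le C:=\max\{3,\,1+\|\Delta_{B_n}\|_{\mathcal P(B_n)}\}$ for all $w$ and all generators $s$, and the reduction of the first paragraph shows that $\psi$ is $C$-Lipschitz. I expect the main obstacle to be precisely the Garside generator: Lemma~\ref{LemmaTechnical} says nothing about $\Delta_{A_n}^{2q}$, and its preimage $\Delta_{B_n}^{q}$ escapes $\langle\Delta_{B_n}^2\rangle$ for odd $q$, so one must argue separately that the correction $a_i^{-1}sa_j$ collapses (via centrality, together with the fact that a pure braid fixes the coset index) and that the stray factor $\Delta_{B_n}$, being a single fixed element, contributes only a bounded amount. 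The parabolic-generator case, by contrast, is essentially a direct application of the already-established Lemma~\ref{LemmaTechnical} together with Lemmas~\ref{L:XPAB} and~\ref{L:Length}.
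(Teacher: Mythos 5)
Your proposal is correct and follows essentially the same route as the paper: reduce to bounding the displacement of $\psi$ under right-multiplication by a single generator, compute $\psi(w)^{-1}\psi(ws)=\eta^{-1}(a_i^{-1}sa_j)$, treat the parabolic generators via Lemma~\ref{LemmaTechnical} together with Lemmas~\ref{L:XPAB}(a) and~\ref{L:Length}(iii), and treat the central powers $\Delta_{A_n}^{2q}$ separately using centrality and Lemma~\ref{L:XPAB}(b), arriving at the same constant $\max\{3,\,1+\|\Delta_{B_n}\|_{\mathcal P(B_n)}\}$ (the paper states it as $1+\max(2,\|\Delta_{B_n}\|_{\mathcal P(B_n)})$). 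No gaps.
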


\begin{proof}
%We shall find a constant $M$ such that for $y,y'\in A_{A_n}$ with $d_{\mathcal P(A_n)}(y,y')=1$, 
%$d_{\mathcal P(B_n)}(\psi(y),\psi(y'))\leqslant M$. This implies that for $x,x'\in A_{B_n}$, $d_{\mathcal P(B_n)}(x,x')\leqslant M d_{\mathcal P(A_n)}(\eta(x),\eta(x'))$ hence the lemma. 
Let $y,y'\in A_{A_n}$ with $d_{\mathcal P(A_n)}(y,y')=1$ and write $g=y^{-1}y'$. This means that $g\in \mathcal P(A_n)$, that is $g=\Delta_{A_n}^{2k}$ for some $k\in \mathbb Z$ or $g\in A_I$ for some $I$.  
There are unique $i_0,j_0\in \{0,\ldots, n\}$ such that $ya_{i_0}\in \mathfrak P_1$ and $y'a_{j_0}\in\mathfrak P_1$. Then also $a_{i_0}^{-1}ga_{j_0}$ is 1-pure. 
Let $x=\eta^{-1}(ya_{i_0})$, $x'=\eta^{-1}(y'a_{j_0})$; then $\psi(y)=x$ and $\psi(y')=x'$. 
We need to estimate $d_{\mathcal P(B_n)}(x,x')$. 

If $g=\Delta_{A_n}^{2k}$, $g$ is central and pure, and as $a_{i_0}^{-1}ga_{j_0}$ is 1-pure, we see that $j_0+1=\pi_g(i_0+1)=i_0+1$ whence $i_0=j_0$ and $a_{ i_0}^{-1}ga_{j_0}=\Delta_{A_n}^{2k}$; that is to say $\eta(x)^{-1}\eta(x')=\Delta_{A_n}^{2k}$, which implies $x^{-1}x'=\Delta_{B_n}^k$ and finally $d_{\mathcal P(B_n)}(x,x')\leqslant 1+ ||\Delta_{B_n}||_{\mathcal P(B_n)}$ (and this bound is just 1 if $k$ is even). 

If $g\in A_I$, Lemma \ref{LemmaTechnical} says that $a_{i_0}^{-1}g a_{j_0}$ or $\tau_I^{-1}(a_{i_0}^{-1}g a_{j_0})\tau_I$ belongs to a standard parabolic subgroup of $A_{A_n}$.
Recall that $\tau_I\in\mathfrak P_1$; the first statement of Lemma \ref{L:XPAB} now says that $x^{-1}x'$ or $\eta^{-1}(\tau_I^{-1})(x^{-1}x')\eta^{-1}(\tau_I)$  belongs to a standard parabolic subgroup of $A_{B_n}$. It follows that 
$d_{\mathcal P(B_n)}(x,x')\leqslant 1+2||\eta^{-1}(\tau_I)||_{\mathcal P(B_n)}\leqslant 3$ (by Lemma \ref{L:Length}(iii)).  

This shows that $d_{\mathcal P(A_n)}(y,y')=1$ implies $d_{\mathcal P(B_n)}(\psi(y),\psi(y'))\leqslant 1+ \max(2,||\Delta_{B_n}||_{\mathcal P(B_n)}).$
\end{proof}

%{\bf{Claim 3.}} {\it{$\psi$ defines a Lipschitz map $\Gamma(A_{A_n},X_{P}^{A_{A_n}})\longrightarrow \Gamma(A_{B_n},X_{P}^{A_{B_n}})$}}.
%\begin{proof}
%Let $y\in X_P^{A_{A_n}}$. Then $y=\Delta_{A_n}^{2k}$ for some $k\in \mathbb Z$ or $y\in A_I$ for some $I$. In the first case, $y\in \mathfrak P_1$ and its preimage $\Delta_{B_n}^k$ has word length at most $1+||\Delta_{B_n}||_{P,B}$ with respect to $X_P^{A_{B_n}}$. Suppose now that $y\in A_I$. If $y\in \mathfrak P_1$, its preimage lies in $B_I$ by Remark \ref{remark}, so it has word length 1 with respect to $X_P^{A_{B_n}}$. If $y\notin\mathfrak P_1$, this forces $1\in I$ and, as $I$ is connected and $\pi_y(1)=i+1$, we obtain $a_i\in A_I$ and then $ya_i\in A_I$. Remark \ref{remark} says now that the preimage under $\eta$ of $ya_i$ lies in $B_I$, that is to say that $\psi(y)\in B_I$; in other words $||\psi(y)||_{P,B}=1$.  
%\end{proof}

\subsection{Proof of Proposition \ref{Prop:XNPHyp}}

The structure of the proof is very similar to that of Proposition \ref{Prop:XPHyp}: the two next results imply immediately Proposition \ref{Prop:XNPHyp}. The discussion in Section \ref{S:Curves} should be guiding the reader through the proof of Proposition \ref{Prop:PsiNPLipschitz} which is slightly less detailed than its analogue Proposition \ref{Prop:PsiLipschitz}.

\begin{proposition}\label{Prop:EtaNPLipschitz}
The homomorphism $\eta$ defines a 1-Lipschitz map 
$$(A_{B_n},d_{\mathcal{NP}(B_n)})\longrightarrow (A_{A_n},d_{\mathcal{NP}(A_n)})$$ 
for which the map $\psi$ is a quasi-inverse.
\end{proposition}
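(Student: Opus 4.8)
The plan is to prove Proposition~\ref{Prop:EtaNPLipschitz} by mirroring the argument already carried out for Proposition~\ref{Prop:EtaLipschitz}, substituting the $\mathcal{NP}$-infrastructure for the $\mathcal P$-infrastructure at each step. The statement has two halves: first that $\eta$ is $1$-Lipschitz from $(A_{B_n},d_{\mathcal{NP}(B_n)})$ to $(A_{A_n},d_{\mathcal{NP}(A_n)})$, and second that $\psi$ is a quasi-inverse. For the Lipschitz claim, I would argue exactly as in the $\mathcal P$-case: since $\eta$ is a homomorphism, it suffices to check that it sends each generator (i.e. each element of the generating set $\mathcal{NP}(B_n)$) to an element whose $\mathcal{NP}(A_n)$-norm is at most $1$. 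This is precisely the content of Lemma~\ref{L:XNPAB}, which gives $\eta(\mathcal{NP}(B_n))=\mathcal{NP}(A_n)\cap\mathfrak P_1\subset\mathcal{NP}(A_n)$. Hence if $x^{-1}x'\in\mathcal{NP}(B_n)$ then $\eta(x)^{-1}\eta(x')=\eta(x^{-1}x')\in\mathcal{NP}(A_n)$, so adjacent points map to adjacent (or equal) points and $d_{\mathcal{NP}(A_n)}(\eta(x),\eta(x'))\leqslant d_{\mathcal{NP}(B_n)}(x,x')$.

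For the quasi-inverse claim, I would reproduce the two observations from the proof of Proposition~\ref{Prop:EtaLipschitz} verbatim, only swapping the length estimate. The first observation, $\psi\circ\eta(x)=x$ for all $x\in A_{B_n}$, is purely set-theoretic and holds by construction of $\psi$ (when $x\in A_{B_n}$ we have $\eta(x)\in\mathfrak P_1$, so the unique coset representative chosen is $a_0=\mathrm{Id}$, giving $\psi(\eta(x))=\eta^{-1}(\eta(x))=x$); this is independent of which metric we use, so it carries over unchanged. The second observation is that for every $y\in A_{A_n}$, the elements $y$ and $\eta\circ\psi(y)$ differ by multiplication by some $a_i$, $0\leqslant i\leqslant n$. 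Here the only change is to invoke Lemma~\ref{L:Length}(ii) in place of Lemma~\ref{L:Length}(i): since $\|a_i\|_{\mathcal{NP}(A_n)}\leqslant 2$, the maps $\mathrm{Id}_{A_{A_n}}$ and $\eta\circ\psi$ are uniformly at distance at most $2$ apart in the $d_{\mathcal{NP}(A_n)}$ metric. Together with $\psi\circ\eta=\mathrm{Id}$, this exhibits $\psi$ as a quasi-inverse to $\eta$.

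I do not expect any serious obstacle here, since all the hard work has been front-loaded into the earlier lemmas: the identification of the generating images (Lemma~\ref{L:XNPAB}) and the coset-representative length bounds (Lemma~\ref{L:Length}(ii)). The only point demanding a little care is the Lipschitz direction, where one should make explicit that being a homomorphism lets us reduce to a statement about generators; the subtlety in the $\mathcal{NP}$ setting — that the normalizer condition is preserved under $\eta$ and that the centralizers of $\sigma_1$ and $\sigma_1^2$ coincide — has already been discharged inside Lemma~\ref{L:XNPAB}, so nothing new is needed at this stage. Consequently I would keep the proof short, essentially a two-sentence pointer to Lemma~\ref{L:XNPAB} for the Lipschitz half and a two-sentence adaptation of the Proposition~\ref{Prop:EtaLipschitz} argument (with Lemma~\ref{L:Length}(ii)) for the quasi-inverse half, leaving the genuinely delicate estimates to the companion Proposition~\ref{Prop:PsiNPLipschitz} about $\psi$ being Lipschitz.
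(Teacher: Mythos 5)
Your proposal is correct and follows exactly the same route as the paper, which simply says the proof is identical to that of Proposition~\ref{Prop:EtaLipschitz} with Lemma~\ref{L:XNPAB} replacing Lemma~\ref{L:XPAB} and Lemma~\ref{L:Length}(ii) replacing Lemma~\ref{L:Length}(i). Your more explicit write-up of the two halves (generator-image containment for the Lipschitz bound, and the $a_i$-coset discrepancy for the quasi-inverse) is just an expansion of the same argument.
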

\begin{proof}
The proof is exactly the same as that of Proposition \ref{Prop:EtaLipschitz}, using now Lemmas \ref{L:XNPAB} and~\ref{L:Length}(ii). 
\end{proof}

\begin{proposition}\label{Prop:PsiNPLipschitz}
The map $\psi: (A_{A_n},d_{\mathcal{NP}(A_n)})\longrightarrow (A_{B_n},d_{\mathcal{NP}(B_n)})$ is Lipschitz.
\end{proposition}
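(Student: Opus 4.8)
The plan is to mirror the proof of Proposition \ref{Prop:PsiLipschitz} almost verbatim, replacing the generating set $\mathcal P$ by $\mathcal{NP}$ throughout, and to check that each of the three cases of Lemma \ref{LemmaTechnical} still produces a bounded jump in the $\mathcal{NP}$-metric. So first I would take $y,y'\in A_{A_n}$ with $d_{\mathcal{NP}(A_n)}(y,y')=1$, set $g=y^{-1}y'\in\mathcal{NP}(A_n)$, and pick the unique $i_0,j_0\in\{0,\dots,n\}$ with $ya_{i_0},\,y'a_{j_0}\in\mathfrak P_1$. As before, $a_{i_0}^{-1}ga_{j_0}$ is then $1$-pure, and writing $x=\psi(y)=\eta^{-1}(ya_{i_0})$, $x'=\psi(y')=\eta^{-1}(y'a_{j_0})$ reduces the problem to bounding $d_{\mathcal{NP}(B_n)}(x,x')$, i.e. to controlling $\eta(x)^{-1}\eta(x')=a_{i_0}^{-1}g\,a_{j_0}$.

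Next I would split according to the structure of $g\in\mathcal{NP}(A_n)$. The new feature relative to Proposition \ref{Prop:PsiLipschitz} is that $g$ need no longer lie in a standard parabolic subgroup $A_I$ nor be a power of $\Delta_{A_n}^2$; rather $g$ normalizes some $A_I$, so by \cite[Theorem 6.3]{Paris} we may write $g=uv$ with $u$ centralizing all $\sigma_i,\,i\in I$ and $v\in A_I$. The powers-of-$\Delta$ case is handled exactly as before: if $g$ is central then $a_{i_0}^{-1}ga_{j_0}=g$ forces $i_0=j_0$ and $x^{-1}x'$ is a power of $\Delta_{B_n}$, giving $d_{\mathcal{NP}(B_n)}(x,x')\le 1+\|\Delta_{B_n}\|_{\mathcal{NP}(B_n)}$. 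For the general normalizing case, the idea is to conjugate $g$ into the position handled by Lemma \ref{LemmaTechnical}: since $a_{i_0}^{-1}ga_{j_0}$ is $1$-pure and $g$ normalizes $A_I$, the element $a_{i_0}^{-1}ga_{j_0}$ (in case (1)), its shift $sh(g)$ (in case (2)), or its $\tau_I$-conjugate (in case (3)) normalizes a \emph{standard} parabolic subgroup of $A_{A_n}$ and lies in $\mathfrak P_1$. Then Lemma \ref{L:XNPAB} transports this back: $x^{-1}x'$, or a conjugate of it by $\eta^{-1}(\tau_I)$, lies in $\mathcal{NP}(B_n)$, and Lemma \ref{L:Length}(iii) controls the conjugating cost, yielding $d_{\mathcal{NP}(B_n)}(x,x')\le 1+2\|\eta^{-1}(\tau_I)\|_{\mathcal{NP}(B_n)}\le 3$.

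The point that needs genuine care — and which I expect to be the main obstacle — is that Lemma \ref{LemmaTechnical} was stated for $g\in A_I$, whereas here $g$ only \emph{normalizes} $A_I$. I would therefore either reprove the relevant conjugation identities for $g\in N_{A_n}(A_I)$, or, more economically, factor $g=uv$ as above and observe that $v\in A_I$ is handled directly by Lemma \ref{LemmaTechnical} while $u$ centralizes $\{\sigma_i:i\in I\}$. The key verification is that conjugation by $a_{i_0}$ (resp. the shift, resp. conjugation by $\tau_I$) sends the centralizing part $u$ to an element that still centralizes the corresponding standard parabolic and still lies in $\mathfrak P_1$, so that the product $a_{i_0}^{-1}ga_{j_0}$ remains an $\mathcal{NP}$-element after at most one $\tau_I$-conjugation. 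Because the computations in cases (1)–(3) of Lemma \ref{LemmaTechnical} are purely about how $a_{i_0}$, the shift and $\tau_I$ act, and because normalizing is preserved under these operations, this should go through; the bound that emerges is again $d_{\mathcal{NP}(A_n)}(y,y')=1\Rightarrow d_{\mathcal{NP}(B_n)}(\psi(y),\psi(y'))\le 1+\max\!\bigl(2,\|\Delta_{B_n}\|_{\mathcal{NP}(B_n)}\bigr)$, which establishes that $\psi$ is Lipschitz.
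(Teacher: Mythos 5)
Your skeleton (reduce to controlling $\eta(x)^{-1}\eta(x')=a_{i_0}^{-1}g\,a_{j_0}$, then transport back via Lemmas \ref{L:XNPAB} and \ref{L:Length}) matches the paper, and you correctly spot that the obstacle is that Lemma \ref{LemmaTechnical} assumes $g\in A_I$ while here $g$ only normalizes $A_I$. But your case division, organized by the position of $i_0+1$ alone as in Lemma \ref{LemmaTechnical}, has a genuine gap: once $g\in N_{A_n}(A_I)\setminus A_I$, the permutation $\pi_g$ need no longer fix the punctures outside $[m,m+k]$, so the \emph{mixed} case $i_0+1<m$ and $j_0+1>m+k$ (or vice versa) can occur. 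In that case $a_{i_0}^{-1}ga_{j_0}$ does not normalize any standard parabolic subgroup in sight: it conjugates $A_I$ onto $A_{I'}$ with $I'=\{i+1,\ i\in I\}$, and there is no reason for such an element to lie in $\mathcal{NP}(A_n)$, so Lemma \ref{L:XNPAB} cannot be applied to it directly. The paper repairs this by post-multiplying by $\sigma_m\cdots\sigma_{m+k}\in\mathfrak P_1$, which conjugates $A_{I'}$ back to $A_I$ at a cost of one extra generator; without this step your bound does not follow. Relatedly, in your case (2) the expression $sh(g)$ is not even defined for $g\notin A_{\{1,\dots,n-1\}}$; the conclusion must be phrased as ``$a_{i_0}^{-1}ga_{j_0}$ normalizes $A_{I'}$'' rather than as an identity of elements.

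The second, smaller issue is that you defer the ``key verification'' (that conjugation by $a_{i_0}$, $a_{j_0}$ and $\tau_I$ carries normalizers of $A_I$ to normalizers of standard parabolics) with ``this should go through''; the factorization $g=uv$ from \cite[Theorem 6.3]{Paris} is an unnecessary detour here, and tracking the intermediate coset representative between $u$ and $v$ would add bookkeeping. The paper's mechanism is more direct: $g$ normalizes $A_I$ if and only if it stabilizes the curve $\mathcal C_I$, hence $a_{i_0}^{-1}ga_{j_0}$ carries $\mathcal C_I^{a_{i_0}}$ to $\mathcal C_I^{a_{j_0}}$, and these curves are identified by Lemma \ref{L:ActionOfai}. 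The dichotomy that actually organizes the proof is that the strand of $g$ starting at position $i_0+1$ is either inner or outer to $\mathcal C_I$, i.e.\ either both $i_0+1,j_0+1\in[m,m+k]$ (conjugate by $\tau_I$, bound $3$) or both lie outside (same side: bound $1$; opposite sides: the mixed case above, bound $2$). Finally, your separate treatment of $g=\Delta_{A_n}^{2k}$ is superfluous for $\mathcal{NP}$, since central elements already normalize every standard parabolic subgroup and are covered by the general case.
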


\begin{proof}
Again, we shall bound universally from above the distance $d_{\mathcal{NP}(B_n)}(\psi(y),\psi(y'))$ whenever $d_{\mathcal{NP}(A_n)}(y,y')=1$. 

Let $y,y'\in A_{A_n}$ with $d_{\mathcal{NP}(A_n)}(y,y')=1$ and write $g=y^{-1}y'$. This means that $g\in N_{A_n}(A_I)$ for some $I$. There are unique $i_0,j_0\in\{0,\ldots,n\}$ such that $ya_{i_0}\in\mathfrak P_1$ and $y'a_{j_0}\in\mathfrak P_1$. Then also $a_{i_0}^{-1}ga_{j_0}\in \mathfrak P_1$. 

Let $m=\min(I)$ and $k=\#I$. We have $\pi_g(i_0+1)=j_0+1$ and either both $i_0+1,j_0+1\in [m,m+k]$ or $i_0+1,j_0+1\in [1,m-1]\cup[m+k+1,n+1]$ --this is to say that in $g$ (which stabilizes the curve $\mathcal C_I$), the strand starting at position $i_0+1$ is either inner or outer to the curve $\mathcal C_I$.

Suppose that $i_0+1,j_0+1\in [m,m+k]$; then by construction, $\tau_I^{-1}a_{i_0}^{-1}ga_{j_0}\tau_I$ normalizes $A_{\{1,\ldots,k\}}$. Lemmas \ref{L:XNPAB} and \ref{L:Length}(iii) imply that $d_{\mathcal{NP}(B_n)}(x,x')\leqslant 1+2||\eta^{-1}(\tau_I)||_{\mathcal{NP}(B_n)}\leqslant 3.$

If $j_0+1<m$, then $a_{i_0}^{-1}ga_{j_0}\in N_{A_n}(A_I)$ whence $d_{\mathcal{NP}(B_n)}(x,x')=1$ by Lemma~\ref{L:XNPAB}. 
%If $i_0+1>m+k$ and $j_0+1<m$, then $a_{i_0}^{-1}ga_{j_0}\in N_{A_n}(A_I)$ whence $d_{\mathcal{NP}(B_n)}(x,x')=1$ by Lemma \ref{L:XNPAB}. 
If $i_0+1,j_0+1>m+k$, then $a_{i_0}^{-1}ga_{j_0}\in N_{A_n}(A_{I'})$ with $I'=\{i+1,i\in I\}$, whence $d_{\mathcal{NP}(B_n)}(x,x')=1$ by Lemma \ref{L:XNPAB}.
If $i_0+1<m$ and $j_0+1>m+k$, then $a_{i_0}^{-1}ga_{j_0}$ conjugates $A_I$ to $A_{I'}$, where $I'=\{i+1,i\in I\}$; in this case, $a_{i_0}^{-1}ga_{j_0}(\sigma_m\ldots\sigma_{m+k})$ normalizes $A_I$ and as $\sigma_m\ldots\sigma_{m+k}\in \mathfrak P_1$, Lemma \ref{L:XNPAB} implies that $d_{\mathcal{NP}(B_n)}(x,x')\leqslant 1+||\eta^{-1}(\sigma_m\ldots\sigma_{m+k})||_{\mathcal{NP}(B_n)}=2.$

It follows that $d_{\mathcal{NP}(B_n)}(\psi(y),\psi(y'))\leqslant 3$, whenever $d_{\mathcal{NP}(A_n)}(y,y')=1$. 
\end{proof}

{\bf{Acknowledgements.}} The author thanks Juan Gonz\'alez-Meneses for pointing a mistake in a preliminary version of this paper. Support by FONDECYT Regular 1180335, MTM2016-76453-C2-1-P and FEDER is gratefully acknowledged.


\begin{thebibliography}{99}

%\bibitem{ABO} {Carolyn Abbott, Sahana Balasubramanya, Denis Osin}, {\it Hyperbolic structures on groups}, arXiv:1710.05197

\bibitem{BHS1} J. Behrstock, M. Hagen, A. Sisto, {\it{Hierarchically hyperbolic spaces I: curve complexes for cubical groups}}, Geom. Top. 21, 2017, 1731--1804.

 \bibitem{BHS2} J. Behrstock, M. Hagen, A. Sisto, {\it{Hierarchically hyperbolic spaces II: Combination theorems and the distance formula}}, arXiv:1509.00632

\bibitem{CalvezWiest1} M. Calvez, B. Wiest, {\it{ Curve graphs and Garside groups}}, Geom. Ded. 188 (1), 2017, 195--213.

\bibitem{CalvezWiest2} M. Calvez, B. Wiest, {\it{Acylindrical hyperbolicity and Artin-Tits groups of spherical type}}, Geom. Ded. 191 (1), 2017, 199--215.

\bibitem{CalvezWiest} M. Calvez, B. Wiest, {\it{Hyperbolic structures on Artin-Tits groups of spherical type}}, arXiv:1904.02234

\bibitem{Chow} W.-L. Chow, {\it{On the algebraical braid group}}, Ann. of Math. (2) 49, 1948, 654--658.

\bibitem{CGGMW} M. Cumplido, V. Gebhardt, J. Gonz\'alez-Meneses, B. Wiest, {\it{On parabolic subgroups of Artin-Tits groups of spherical type}}, Adv.  Math. 352, 2019, 572--610.

\bibitem{FRZ} R. Fenn, D. Rolfsen, J. Zhu, {\it{Centralisers in the braid group and singular braid monoid}}, L'enseignement math\'ematique 42, 1996, 75--96.

\bibitem{Peifer} A. Kent, D. Peifer, {\it{A geometric and algebraic description of annular braid groups}}, Int. J. Alg. Comp. 12, 2002, 85--97.

\bibitem{MasurMinsky1} H. Masur, Y. Minsky, {\it {Geometry of the complex of curves I: Hyperbolicity}}, Invent. Math. 138, 1999, 103--149. 

\bibitem{MorrisWright} R. Morris-Wright, {\it{Parabolic Subgroups of FC-Type Artin groups}}, arXiv:1906.07058

\bibitem{Paris} L. Paris, {\it{Parabolic subgroups of Artin groups}}, J. Algebra 196, 1997, 369--399.

\bibitem{Sisto} A. Sisto, {\it{What is a hierarchically hyperbolic space?}}, arXiv:1707.00053

\bibitem{VanDerLek} {Harm Van der Lek},  {\it The Homotopy Type of Complex Hyperplane Complements}, Ph.D. thesis (1983), Nijmegen


\end{thebibliography}
\end{document}